\newtheorem{theorem}{Theorem}
\newtheorem{lemma}{Lemma}
\newtheorem{definition}{Definition}
\newtheorem{corollary}{Corollary}
\title{Fast stable finite difference schemes for nonlinear cross-diffusion}
\author{Diogo Lobo \\ diogo.lobo@mat.uc.pt \\ CMUC, Department of Mathematics,
		University of Coimbra}
\date{May 2021}
\begin{document}
	
	\maketitle
	
	\begin{abstract}
		The dynamics of cross-diffusion models leads to a high computational complexity for implicit difference schemes, turning them unsuitable for tasks that require results in real-time. We propose the use of two operator splitting schemes for nonlinear cross-diffusion processes in order to lower the computational load, and establish their stability properties using discrete $L^2$ energy methods. Furthermore, by attaining a stable factorization of the system matrix as a forward-backward pass, corresponding to the Thomas algorithm for self-diffusion processes, we show that the use of implicit cross-diffusion can be competitive in terms of execution time, widening the range of viable cross-diffusion coefficients for \textit{on-the-fly} applications.
	\end{abstract}
	
	\section{Introduction}
	
	Cross-diffusion models consist of evolutionary systems of diffusion type for at least two real-valued functions, where the evolution of each function is not independent of the others. Their use is widespread in areas like population dynamics (see \cite{Andreianov11} and references therein), but has recently attracted some interest in the image processing community \cite{AraEtAl17a,AraEtAl17b,AraEtAl17c,Barbeiro20} as a natural extension to the complex diffusion methods proposed by Gilboa et al. \cite{GilSocZee04}, where the image is represented by a complex function and the filtering process is governed by a nonlinear PDE of diffusion type with a complex-valued diffusion coefficient. This equation can be written as a cross-diffusion system for the 1) real and 2) imaginary parts of the image, and enhanced imaging possibilities emerge if we drop the complex point of view and work with 1) the image to be processed and 2) information about that image (e.g., edge locations)  \cite{AraEtAl17c,Barbeiro20}.
	
	It is known that implicit schemes have better stability properties when compared to their explicit counterparts \cite{Suli14}, allowing larger time steps and, in the particular case of cross-diffusion, a wider range of diffusion coefficients \cite{AraEtAl17c,Barbeiro20}. However, the computational complexity of traditional implicit finite difference schemes turns their use impracticable for tasks that aim for \textit{on-the-fly} results, such as image processing in medical contexts. In this field, normally used strategies to speed-up computations include operator splitting techniques, multigrid schemes or, more recently, fast explicit diffusion \cite{Weickert98,Weickert16}. Several splitting techniques have been proposed since the middle of the previous century. First introduced by Douglas, Peaceman and Rachford \cite{Douglas56,Douglas64}, they were further explored by others such as Marchuk or Yanenko \cite{Yanenko71,Marchuk90}. Overall, their aim is the reduction of a multidimensional problem to a sequence of one-dimensional problems, in order to obtain simplified linear systems that can be efficiently processed by computers. In the case of the linear diffusion equation in two dimensions, the factorization of a $5$ point stencil spatial operator leads to a series of tridiagonal systems that can be processed by Thomas algorithm \cite{Quarteroni07,Higham02}. 
	
	In this report we propose the generalization of two splitting techniques commonly used in the imaging community, usually referred to as Additive Operator Splitting (AOS) and Additive Multiplicative Operator Splitting (AMOS), to the nonlinear cross-diffusion case. AOS schemes are very popular since their introduction to the PDE-related imaging field by Weickert \cite{Weickert98}, but their source dates back a few years to the work of Tai and Neitaanmaki \cite{Tai91}. These authors noted that the classical splitting-up methods can not be used for parallel processors as the computation of the current fractional step always requires the knowledge of the previous fractional step. They proposed a new splitting where the computation of the fractional steps are independent of each other, and therefore parallelizable. Further work, now also with Lu \cite{Lu91}, explored convergence estimates for linear and nonlinear elliptic problems and for linear and quasilinear evolution equations. AMOS schemes belong to the family of multiplicative locally one-dimensional schemes, the ones where each fractional step computation requires the value of the previous fractional step. They rely on the same factorization of AOS, but each operator is now applied on top of each other (being called multiplicative). As, in general, the split operators do not commute, the final result depends on the order of the one-dimensional operators. This is a major drawback in some applications, particularly in image processing, as one usually aims for rotation invariant filters. In order to overcome this problem, Barash and Kimmel \cite{Bara01} added a symmetric setting by applying a multiplicative scheme in all possible orders and averaging the results. The generalization of these techniques to our problem still has some drawbacks, as the numerical system to solve remains dense due to the interdependence of the evolution functions inherent to the cross-diffusion model. To overcome this issue we reorder the unknowns to obtain a tridiagonal by blocks system matrix. This new equivalent system allows for a block LU factorization solver to act as a forward-backward pass, similar to the process given by Thomas algorithm \cite{Higham02}.
	
	Regarding previous works on numerical schemes for cross-diffusion processes, the available literature is scarce and scattered. For general nonlinear cross-diffusion systems, a linear scheme based on the nonlinear Chernoff formula was proposed by Murakawa \cite{Murakawa11} and a convergent finite volume method was introduced by Andreainov et al. \cite{Andreianov11}. This technique was further explored again by Murakawa in \cite{Murakawa2017}, where explicit algebraic corrections at each time step were employed in order to achieve unconditional stability. However, to the best of our knowledge, the only attempt for computational competitive schemes comes from Beauregard and Padgett \cite{Beauregard18}, where a Douglas-Gunn splitting finite difference scheme was deployed to achieve conditional stability, and a  computation time scaling as $\mathcal{O}(N^{1.7568})$ was obtained experimentally for one dimensional processes.
	
	The manuscript is organized in the following way. In Section \ref{section:main_method} we describe the main model. We present the standard finite difference $\theta$-method for the two-dimensional case, and we derive its stability properties. In Section \ref{section:splittings} we introduce the AOS and AMOS schemes for the cross-diffusion model, and we extend the stability results for these schemes and discuss their computational implementations. Still in this section, we describe how to design the system matrix as tridiagonal by blocks, and discuss the stability conditions of the correspondent factorization. In Section \ref{section:3D} we extend the results to the three-dimensional case. Finally, in Section \ref{section:speedups} we present the theoretical score of speedups and some numerical experiments.
	
	\section{Model and standard finite difference scheme}
	\label{section:main_method}
	\subsection{Nonlinear cross-diffusion system}
	We consider a nonlinear cross-diffusion with semi-linear reaction process that can be represented by a two-component vector field,  ${\bf w}=(u,v)^\top$, satisfying the  system\begin{equation} \label{cross-diffusion}
		\begin{cases}
			u_t=\nabla \cdot(d^1(u,v,t)\nabla u + d^2(u,v,t)\nabla v) - \lambda_1(u,v,t) (u-u^0)\text{ in }\Omega\times\mathbb{R}^+,\\
			v_t=\nabla \cdot(d^3(u,v,t)\nabla u + d^4(u,v,t)\nabla v) - \lambda_2(u,v,t) (v-v^0)\text{ in }\Omega\times\mathbb{R}^+,\\
			u({\bf x},0)=u^0({\bf x}),\text{ }v({\bf x},0)=v^0({\bf x})\text{ in } \Omega,\\
			u_\eta =0,\text{ }v_\eta=0\text{ on } \Gamma\times\mathbb{R}^+,
		\end{cases}
	\end{equation}
	where  $\Omega=(a_{1},b_{1})\times (a_{2},b_{2})\subset\mathbb{R}^{2}$ is the domain of interest, $u^0$ and $v^0$ are the given initial conditions for $u$ and $v$ and $\eta$ denotes the outward normal vector to the boundary  $\Gamma=\partial \Omega$.   
	The cross-diffusion matrix  of the model is given by
	\begin{equation}\label{cross-diffusion matrix}
		D(u,v,t)={\begin{bmatrix}d^1({u,v},t)&d^2({u,v},t)\\d^3({u,v},t)&d^4({u,v},t)\end{bmatrix}},
	\end{equation}
	where $d^\ell,$ $\ell=1,\dots,4$, are the so called influence functions, and $\lambda_1,\lambda_2$ are non-negative real valued bounded functions. Before moving on to the theoretical results for finite difference implementations of (\ref{cross-diffusion}), we start by grounding ourselves with some definitions. More precisely, we state our understanding of positive definite matrices \cite{Horn12}:
	
	\begin{definition}
		Let $A$ be a complex valued square matrix of size $n$. We say that $A$ is positive (semi-)definite if
		$$x^\top A x > 0\quad (\geq 0) ,\quad \forall x\in \mathbb{C}^n.$$
	\end{definition}

	The standard definition for positive (semi-)definiteness of a matrix $A$ implies that $A$ is automatically hermitian \cite{Dym08}. We will make an intensive use of a similar class of matrices, where the implicit necessity for hermiticity is no longer required:
	
	\begin{definition}
		Let $A$ be a real valued square matrix of size $n$. We say that $A$ is \emph{positive (semi-)definite, not necessarily symmetric}, if
		$$x^\top A x > 0\quad (\geq 0) ,\quad \forall x\in \mathbb{R}^n.$$
	\end{definition} 

	We also unambiguously define the notion of stable finite difference schemes following Jovanovi{\'c} and S{\"u}li \cite{Suli14}.
	
	\begin{definition}
		Let $(\bar{U}^m,\bar{V^m})$ be the $m$-step of a finite difference scheme with space-mesh parameter $h$ and time step $\Delta t$ applied to problem (\ref{cross-diffusion}). Let $\|\cdot\|_{\phi(h)}$ be a mesh-dependent norm involving mesh points of $\Omega$. We say that the scheme is unconditionally stable if there is $C>0$ independent of the mesh size such that 
		\begin{equation}
			\label{suli:stab_cdt}
			\|(\bar{U}^m,\bar{V^m})\|_{\phi_h}\leq C\|(U^0,V^0)\|_{\phi_h}
		\end{equation}
		holds for any choice of parameters $h$ and $\Delta t$, where $(U_0,V_0)$ are the initializations $u^0$ and $v^0$ taken at the mesh points. If (\ref{suli:stab_cdt}) holds only for $\Delta t\leq C(h)$, where $C(h)$ is a function of $h$, we say that the scheme is conditionally stable.
	\end{definition}

	\subsection{Explicit and semi-implicit implementations and stability results}
	
	Let the domain $\overline{\Omega}=\Omega\cup\Gamma$ be discretized by the points ${\bf x}_{{\bf j}}=(x_{j_{1}},x_{j_{2}})$, where
	\[
	x_{j_{1}}=a_{1}+h_{1}j_{1},\; x_{j_{2}}=a_{2}+h_{2}j_{2},\quad j_{k}=0,1,\ldots,N_{k},\]
	\[ h_{k}=\frac{b_{k}-a_{k}}{N_{k}}, \; k=1,2,
	\]
	for two given integers $N_{1}, N_{2}\geq 1$, ${\bf j}=(j_{1},j_{2})$ and ${\bf h}=(h_{1},h_{2})$ . This spatial mesh on $\overline{\Omega}$ is denoted by $\overline{\Omega}_{\bf h}$ and  $\Gamma_{\bf h}=\Gamma \cap \overline{\Omega}_{\bf h}$.  
	Points halfway between two adjacent grid points are denoted by 
	${\bf x}_{{\bf j}\pm (1/2){\bf e}_k}={\bf x}_{\bf j} \pm  \frac{h_k}{2}{\bf e}_{k}$, $k=1,2$, where $\{{\bf e}_{1},{\bf e}_{2}\}$ 
	is the $\mathbb{R}^2$ canonical basis, that is, ${\bf e}_{k}$ is the standard basis unit vector in the $k$th direction. 
	
	For the discretization in time, we consider a mesh with time step $\Delta t$,
	$0=t^0<t^1<t^2<\ldots$, where $t^{m+1}-t^m=\Delta t$. 
	
	We denote by $Z_{\bf j}^m$ the value of a mesh function $Z$ at the point $({\bf x}_{{\bf j}}, t^m)$. 
	For the formulation of the finite difference approximations, we use the centered finite difference quotients in the $k$th spatial direction, for $k=1,2$,
	\[
	\delta_{k}Z_{\bf j} = \frac{Z_{{\bf j}+(1/2){\bf e}_{k}}-Z_{{\bf j}-(1/2){\bf e}_{k}}}{h_{k}}, \]
	\[ \delta_{k}Z_{{\bf j}+(1/2){\bf e}_{k}} = \frac{Z_{{\bf j}+{\bf e}_{k}}-Z_{\bf j}}{h_{k}}.\label{sm2a}
	\]

	An initial distribution $(U^0,V^0)$ is required, given by two real-valued functions $U^0,V^0: \overline \Omega_{\bf h}\rightarrow\mathbb{R}$. Let ${\bf W}^{m}_{\bf j}=(U_{{\bf j}}^{m},V_{{\bf j}}^{m})^\top$, such that ${\bf x}_{\bf j} \in  \overline \Omega_{\bf h}$. Given the initial solution ${\bf W}^{0}_{\bf j}=(U^0_{\bf j},V^0_{\bf j})$,  the numerical solution of (\ref{cross-diffusion}) at the time $t^{m+1}$ is obtained considering the following finite difference scheme:
	
	\begin{equation}
		\label{finite_difference_scheme}
		\begin{cases}
			\frac{U_{\bf j}^{m+1}-U_{\bf j}^{m}}{\Delta t} =
			\sum\limits_{k=1}^{2}{ \delta_{k}\left(d^1({\bf W}^m)_{\bf j}^{m+\theta}\delta_{k}U_{{\bf j}}^{m+\theta}+d^2({\bf W}^m)_{\bf j}^{m+\theta}\delta_{k}V_{{\bf j}}^{m+\theta}\right)
				-\lambda_1({\bf W}^m)_{\bf j}^{m+\theta} (U_{\bf j}^{m+\theta}-U_{\bf j}^0),} \\
			\frac{V_{\bf j}^{m+1}-V_{\bf j}^{m}}{\Delta t} = 
			\sum\limits_{k=1}^{2}{ \delta_{k}\left(d^3({\bf W}^m)_{\bf j}^{m+\theta}\delta_{k}U_{{\bf j}}^{m+\theta}+d^4({\bf W}^m)_{\bf j}^{m+\theta}\delta_{k}V_{{\bf j}}^{m+\theta}\right)-\lambda_2({\bf W}^m)_{\bf j}^{m+\theta} (V_{\bf j}^{m+\theta}-V_{\bf j}^0),}
		\end{cases}
	\end{equation}
	where
	\begin{equation}\label{definition_D}
		\begin{aligned}
			&d_\ell({\bf W}^m)_{{\bf j}\pm (1/2){\bf e}_{k}}^{m+\theta}=\frac{d_\ell({\bf W}^m_{{\bf j}},t^{m+\theta})+ d_\ell({\bf W}^m_{{\bf j}\pm{\bf e}_{k}},t^{m+\theta})}{2},\quad \ell=1,\dots,4,\\
			&\lambda_i({\bf W}^m)_{\bf j}^{m+\theta}=\lambda_i({\bf W}^m_{\bf j},t^{m+\theta}), \quad i=1,2,\\
			&Z_{\bf j}^{m+\theta}=\theta Z_{\bf j}^{m+1}+(1-\theta)Z_{\bf j}^m,\quad Z=U,V,
		\end{aligned}
	\end{equation}
	and $\theta \in [0,1]$ corresponding to explicit and semi-implicit implementations, respectively when $\theta=0$ and $\theta=1$, and to a semi-implicit Crank-Nicholson type implementation when $\theta=\frac{1}{2}$. 
	
	For each $x_{\bf j}=(x_{j_1},x_{j_2})\in \bar{\Omega}_{\bf h}$, we define the rectangle $\square_{\bf j}=(x_{j_1},x_{j_1+1})\times(x_{j_2},x_{j_2+1})$ and denote by $|\square_{\bf j}|$ the measure of $\square_{\bf j}$.   We consider the discrete  $L^2$ inner products 
	\begin{equation}\label{inner_product1_2D}
		\begin{aligned}
			({U},{ V})_h&=\sum_{\square_{\bf j}\subset \Omega} \frac{|\square_{\bf j}|}{4} \left({ U}_{j_1,j_2}{ V}_{j_1,j_2} +{ U}_{j_1+1,j_2}{{ V}}_{j_1+1,j_2}\right.\\
			&\quad \left.+{U}_{j_1,j_2+1}{V}_{j_1,j_2+1}+{ U}_{j_1+1,j_2+1}{{ V}}_{j_1+1,j_2+1}\right),
		\end{aligned}
	\end{equation}
	\begin{equation*}
		({U},{V})_{h_1^*}=\sum_{\square_{\bf j}\subset \Omega} \frac{|\square_{\bf j}|}{2} \left({U}_{j_1+1/2,j_2}{{V}}_{j_1+1/2,j_2}
		+{ U}_{j_1+1/2,j_2+1}{ V}_{j_1+1/2,j_2+1}\right),
	\end{equation*}
	\begin{equation*}
		({ U},{ V})_{h_2^*}=\sum_{\square_{\bf j}\subset \Omega} \frac{|\square_{\bf j}|}{2} \left({ U}_{j_1,j_2+1/2}{{ V}}_{j_1,j_2+1/2} +{ U}_{j_1+1,j_2+1/2}{{ V}}_{j_1+1,j_2+1/2}\right).
	\end{equation*}
	Their correspondent norms are denoted by $\|.\|_h$, $\|.\|_{h_1^*}$ and  $\|.\|_{h_2^*}$, respectively.  For ${\bf W}=({U},{ V})^\top$ we define $\|{\bf W}\|_h^2=\|{ U}\|_h^2+\|{ V}\|_h^2$.

	To simplify the notation and when it is clear from the context, we will write $d_\ell$ instead of $d_\ell({\bf W}^m)^{m+\theta},$ or $d_\ell({\bf W}^m)_{\bf j}^{m+\theta}$, for $\ell=1,2,3,4$.

	Theorem \ref{thrm:L2_stab_conditions} states the stability conditions for this finite difference scheme, which is an extension of a previous result presented in \cite{Barbeiro20}. Define $\lambda_\text{max}$ as the supreme of both $\lambda_1$ and $\lambda_2$.
	
	\begin{theorem}\label{thrm:L2_stab_conditions}
		If the cross-diffusion matrix (\ref{cross-diffusion matrix}) is such that, for any $U,V\in\mathbb{R}^{N_1\times N_2}$,
		\begin{equation}\label{cdts:functions_semiimplicit}
			\sum\limits_{k=1}^2 (d^1\delta_k U, \delta_k U)_{h_k^*}+(d^2\delta_k V, \delta_k U)_{h_k^*}+(d^3\delta_k U, \delta_k V)_{h_k^*}+(d^4\delta_k V, \delta_k V)_{h_k^*}\geq 0
		\end{equation}
		then the scheme (\ref{finite_difference_scheme}) is unconditionally stable for $\theta\in[\frac{1}{2},1]$.
		
		If the cross-diffusion matrix (\ref{cross-diffusion matrix}) is such that, for any $U,V\in\mathbb{R}^{N_1\times N_2}$,
		\begin{equation}
			\begin{aligned}
				\label{cdts:functions_explicit}
				\sum\limits_{k=1}^2 & (d^1\delta_k U, \delta_k U)_{h_k^*}+(d^2\delta_k V, \delta_k U)_{h_k^*}+(d^3\delta_k U, \delta_k V)_{h_k^*}+(d^4\delta_k V, \delta_k V)_{h_k^*}\\
				& -\big( \frac{4\Delta t}{h_k^2}\big((1+\eta_{1k})(\|d^1\delta_{k}U\|_{h_k^*}^2+\|d^2\delta_{k}V\|_{h_k^*}^2 +2(d^1\delta_{k}U,d^2\delta_{k}V)_{h_k^*})\\
				& +(1+\eta_{2k})(\|d^3\delta_{k}U\|_{h_k^*}^2+\|d^4\delta_{k}V\|_{h_k^*}^2+2(d^3\delta_{k}U,d^4\delta_{k}V)_{h_k^*}) \big) \geq 0,
			\end{aligned}
		\end{equation}
		for some $\eta_{1k},\eta_{2k} >0$, $k=1,2$, then the scheme (\ref{finite_difference_scheme}) is unconditionally stable for the case $\theta=0$.
	\end{theorem}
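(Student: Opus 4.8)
The plan is to use the discrete $L^2$ energy method, testing each equation of the scheme (\ref{finite_difference_scheme}) against the natural $\theta$-weighted iterate. Concretely, I would take the discrete inner product $(\cdot,\cdot)_h$ of the first equation with $U^{m+\theta}$ and of the second with $V^{m+\theta}$. Each diffusive term $(\sum_k \delta_k(d^\ell \delta_k Z^{m+\theta}), \Phi)_h$ is then rewritten by discrete summation by parts, which transfers one difference operator and produces $-\sum_k (d^\ell \delta_k Z^{m+\theta}, \delta_k \Phi)_{h_k^*}$; the boundary contributions vanish because the homogeneous Neumann condition forces the discrete normal fluxes on $\Gamma_{\bf h}$ to be zero. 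Since the coefficients $d^\ell({\bf W}^m)^{m+\theta}$ are lagged at the old time level, they are fixed data when testing, so adding the two resulting identities collapses the entire diffusive contribution to exactly the bilinear form appearing on the left of the hypotheses (\ref{cdts:functions_semiimplicit}) and (\ref{cdts:functions_explicit}), evaluated at $U=U^{m+\theta}$, $V=V^{m+\theta}$.

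The second ingredient is the algebraic identity
$$(Z^{m+1}-Z^m, Z^{m+\theta})_h = \tfrac12\big(\|Z^{m+1}\|_h^2 - \|Z^m\|_h^2\big) + \big(\theta - \tfrac12\big)\|Z^{m+1}-Z^m\|_h^2,$$
applied to $Z=U$ and $Z=V$, which rewrites the left-hand (time-difference) side of the tested scheme as the increment of the energy $\|{\bf W}^m\|_h^2 = \|U^m\|_h^2 + \|V^m\|_h^2$ plus the signed increment term. For $\theta\in[\tfrac12,1]$ the factor $\theta-\tfrac12\geq 0$, so the increment term can be discarded after moving it to the favourable side; combined with hypothesis (\ref{cdts:functions_semiimplicit}), which makes the diffusive bilinear form nonnegative, this yields
$$\frac{1}{2\Delta t}\big(\|{\bf W}^{m+1}\|_h^2 - \|{\bf W}^m\|_h^2\big) \leq -(\lambda_1(U^{m+\theta}-U^0),U^{m+\theta})_h - (\lambda_2(V^{m+\theta}-V^0),V^{m+\theta})_h.$$

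I would dispose of the reaction terms pointwise: since $\lambda_i\geq 0$, writing each as $-(\lambda_i Z^{m+\theta}, Z^{m+\theta})_h + (\lambda_i Z^0, Z^{m+\theta})_h$ and completing the square node by node bounds the right-hand side above by $\tfrac{\lambda_{\max}}{4}\|{\bf W}^0\|_h^2$. Hence $\|{\bf W}^{m+1}\|_h^2 \leq \|{\bf W}^m\|_h^2 + \tfrac{\lambda_{\max}\Delta t}{2}\|{\bf W}^0\|_h^2$, and a discrete Gronwall / telescoping argument over $m\Delta t \leq T$ produces (\ref{suli:stab_cdt}) with a mesh-independent constant $C=C(\lambda_{\max},T)$ (with $C=1$ when $\lambda_{\max}=0$), establishing unconditional stability for $\theta\in[\tfrac12,1]$.

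For $\theta=0$ the same computation goes through except that now $\theta-\tfrac12=-\tfrac12$, so the increment term $\tfrac{1}{2\Delta t}(\|U^{m+1}-U^m\|_h^2 + \|V^{m+1}-V^m\|_h^2)$ appears with an unfavourable sign and must be absorbed. Here I would substitute the explicit update $Z^{m+1}-Z^m = \Delta t[\sum_k\delta_k(\ldots) - \lambda_i(\ldots)]$ and estimate $\|Z^{m+1}-Z^m\|_h^2$ using the inverse inequality $\|\delta_k Y\|_h^2 \leq \tfrac{4}{h_k^2}\|Y\|_{h_k^*}^2$ (which, together with the Cauchy--Schwarz split over the two directions, is responsible for the $\tfrac{4\Delta t}{h_k^2}$ prefactors) and Young's inequality $\|a+b\|^2 \leq (1+\eta)\|a\|^2 + (1+\eta^{-1})\|b\|^2$ to separate each equation's flux from the rest; the free parameters $\eta_{1k},\eta_{2k}>0$ are exactly the splitting constants. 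Collecting terms, the net coefficient of the energy increment is nonpositive precisely when (\ref{cdts:functions_explicit}) holds, which closes the estimate. I expect the main obstacle to be this last case: organising the expansion of $\|Z^{m+1}-Z^m\|_h^2$ so that all cross terms $2(d^1\delta_k U, d^2\delta_k V)_{h_k^*}$ and $2(d^3\delta_k U, d^4\delta_k V)_{h_k^*}$ land in the exact form of (\ref{cdts:functions_explicit}), and handling the reaction part of the increment (absorbed via the same completing-the-square and Gronwall device) without spoiling the constant.
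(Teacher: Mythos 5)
Your proposal is correct and follows essentially the same route as the paper: test the scheme with $U^{m+\theta}$ and $V^{m+\theta}$ in $(\cdot,\cdot)_h$, use summation by parts to reduce the diffusive terms to the bilinear form in the hypotheses, discard the nonnegative reaction quadratic and control the $(\lambda_i Z^0, Z^{m+\theta})_h$ term by a Young/completing-the-square argument, and close with a discrete Gronwall (the paper's ``discrete Duhamel principle''); your treatment of $\theta=0$ via the increment identity and the inverse inequality is the same mechanism the paper invokes by taking $\|\cdot\|_h$ of both sides of the explicit update. Your pointwise completion of squares even gives a marginally cleaner constant than the paper's $\epsilon$-dependent exponential bound, but the argument is the same.
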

	\begin{proof}
		Let $m\in\mathbb{N}$ be any iteration of the finite difference scheme (\ref{finite_difference_scheme}). Let us multiply both members by $U^{m+\theta}$ and $V^{m+\theta}$, respectively, according to the discrete inner product $(\cdot,\cdot)_h$, use summation by parts, the non-negativity of the reaction multiplier, and the discrete Duhamel principle \cite{ChanShen87} to obtain, for the case $\theta\in [\frac{1}{2},1]$,
		\begin{equation*}
			\|{\bf W}^{m+1}\|_h^2 \leq  e^{2(\theta ^2 + (1-\theta)^2)\tilde{\epsilon}^{-1}\epsilon t^{m+1}} \Big(1+t^{m+1}\lambda_\text{max}\epsilon^{-1}\tilde{\epsilon}^{-1}\Big) \|{\bf W}^0\|_h^2,
		\end{equation*}
		with $\tilde{\epsilon}<1-2\Delta t \epsilon\theta^2$, for some $\epsilon>0$. For the case $\theta=0$, take the same steps and the norm $\|\cdot\|_h$ on both sides of (\ref{finite_difference_scheme}) to obtain
		\begin{equation}
			\label{duhamel}
			\begin{aligned}
				&\|{\bf W}^{m+1}\|_h^2\leq  e^{a_\epsilon t^{m+1}}\Big(1+t^{m+1}b_\epsilon\Big)\|{\bf W}^0\|_h^2,
			\end{aligned}
		\end{equation}
		with $a_\epsilon = 2\Delta t(\epsilon + \Delta t \zeta)$ and $b_\epsilon = 2(\frac{\lambda_{\text{max}}^2}{2\epsilon}+\Delta t\zeta)$, where $\zeta=\max \{1+\eta_1^{-1},1+\eta_2^{-1}\}\lambda_\text{max}^2$.
	\end{proof}
	
	We supplement this result with a series of corollaries related to common instances of cross-diffusion matrices \cite{AraEtAl17a,AraEtAl17b,AraEtAl17c,BerEtAl10,GilSocZee04}.
	
	\begin{corollary}
		If the cross-diffusion matrix (\ref{cross-diffusion matrix}) is semi-positive definite, not necessarily symmetric, then the method is unconditionally stable for any $\theta \in [\frac{1}{2},1]$.
	\end{corollary}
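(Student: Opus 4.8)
The plan is to derive the discrete coercivity hypothesis (\ref{cdts:functions_semiimplicit}) of Theorem \ref{thrm:L2_stab_conditions} directly from the pointwise positive semi-definiteness of $D$, so that unconditional stability for $\theta\in[\frac{1}{2},1]$ follows as an immediate application of that theorem. No fresh energy estimate is needed; the content is entirely in matching the algebraic structure of the summand in (\ref{cdts:functions_semiimplicit}) with the quadratic form $x^\top D x$.

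First I would expand the summand. Each inner product $(\cdot,\cdot)_{h_k^*}$ is, by definition, a sum with the positive weights $\tfrac{|\square_{\bf j}|}{2}$ of pointwise products evaluated at the edge midpoints ${\bf x}_{{\bf j}+(1/2){\bf e}_k}$. Collecting the four contributions at a single such midpoint gives
\[
 d^1(\delta_k U)^2 + (d^2+d^3)(\delta_k U)(\delta_k V) + d^4(\delta_k V)^2.
\]
Setting $x=(\delta_k U,\delta_k V)^\top$ at that midpoint, this is exactly $x^\top D x$, using $d^2 x_1x_2 + d^3 x_2x_1 = (d^2+d^3)x_1x_2$; crucially the off-diagonal entries enter only through the combination $d^2+d^3$, so no symmetry of $D$ is assumed. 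Hence the left-hand side of (\ref{cdts:functions_semiimplicit}) rewrites as $\sum_{k}\sum_{\text{midpoints}}(\text{weight})\,x^\top D x$.

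Next I would handle the averaging in (\ref{definition_D}): the coefficient values at an edge midpoint are the arithmetic mean of the nodal values at the two adjacent grid points, so the matrix $D$ acting at a midpoint is the average of its two nodal counterparts. The set of real matrices $A$ with $x^\top A x\ge 0$ for all $x$ is a convex cone — if $x^\top A_1 x\ge 0$ and $x^\top A_2 x\ge 0$ then $x^\top(\alpha A_1+\beta A_2)x\ge 0$ for $\alpha,\beta\ge 0$ — so the averaged matrix inherits positive semi-definiteness, not necessarily symmetric, from the nodal matrices. Therefore each summand $x^\top D x$ is non-negative.

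Finally, a sum of non-negative terms weighted by the strictly positive quantities $\tfrac{|\square_{\bf j}|}{2}$ is non-negative, which is precisely condition (\ref{cdts:functions_semiimplicit}); invoking Theorem \ref{thrm:L2_stab_conditions} closes the argument. I expect the only delicate point to be the index bookkeeping that collapses the four inner products into the single quadratic form $x^\top D x$ at each midpoint — together with the remark that the nodal-average definition of the coefficients preserves positive semi-definiteness — since everything else is just non-negativity of a positively weighted sum.
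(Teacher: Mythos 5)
Your proposal is correct and follows essentially the same route as the paper: rewrite the left-hand side of (\ref{cdts:functions_semiimplicit}) as a positively weighted sum over edge midpoints of the quadratic form $x^\top D x$ with $x=(\delta_k U,\delta_k V)^\top$, then use the averaging in (\ref{definition_D}) together with pointwise positive semi-definiteness to conclude each summand is non-negative. Your explicit convex-cone justification for why the midpoint average inherits positive semi-definiteness is a small elaboration of a step the paper leaves implicit.
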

	\begin{proof}
		Notice that we can write (\ref{cdts:functions_semiimplicit}) as
		\begin{equation*}
			\sum_{k=1}^2 \sum_{\square_{\bf j}\subset \Omega} \sum_{i=0,1}  \frac{|\square_{\bf j}|}{2} \begin{bmatrix}
					\delta_k U_{{\bf j}+e_k/2+ie_l} \\
					\delta_k V_{{\bf j}+e_k/2+ie_l}
				\end{bmatrix}^\top
				\begin{bmatrix}
					d^1({\bf W})_{{\bf j}+e_k/2+ie_l} & d^2({\bf W})_{{\bf j}+e_k/2+ie_l}\\
					d^3({\bf W})_{{\bf j}+e_k/2+ie_l} & d^4({\bf W})_{{\bf j}+e_k/2+ie_l}
				\end{bmatrix}\begin{bmatrix}
					\delta_k U_{{\bf j}+e_k/2+ie_l} \\
					\delta_k V_{{\bf j}+e_k/2+ie_l}
			\end{bmatrix},
		\end{equation*}
		where $e_l$ is the direction orthogonal to $e_k$. If the cross-diffusion matrix is semi-positive definite, taking (\ref{definition_D}) into account gives that the summand is non negative.
	\end{proof}
	
	\begin{corollary}
		If the cross-diffusion matrix (\ref{cross-diffusion matrix}) is of the form
		$$\begin{bmatrix}
				d^1({\bf W}) & d^2({\bf W})\\
				d^3({\bf W}) & d^4({\bf W})
		\end{bmatrix}=g({\bf W})M,$$
		with $g$ a non-negative function and $M\in\mathbb{R}^{2\times 2}$ a semi-positive definite matrix, not necessarily symmetric, then the method is unconditionally stable for any $\theta \in [\frac{1}{2},1]$. 
	\end{corollary}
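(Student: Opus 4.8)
The plan is to reduce the claim to the preceding corollary by checking that the factored form $g({\bf W})M$ always yields a positive semi-definite (not necessarily symmetric) cross-diffusion matrix, so that the hypothesis of that corollary is met.

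First I would record the elementary closure property that a non-negative scalar multiple of a positive semi-definite matrix, in the sense of the second definition (i.e.\ not necessarily symmetric), is again positive semi-definite. For every $x\in\mathbb{R}^2$ and every ${\bf W}$,
\[
x^\top\big(g({\bf W})M\big)x = g({\bf W})\,\big(x^\top M x\big)\geq 0,
\]
because $g({\bf W})\geq 0$ by assumption and $x^\top M x\geq 0$ since $M$ is positive semi-definite. Crucially, this argument is insensitive to the possible lack of symmetry of $M$, as the scalar $g({\bf W})$ simply factors out of the quadratic form. Hence the cross-diffusion matrix $D({\bf W},t)=g({\bf W})M$ is positive semi-definite, not necessarily symmetric, at every evaluation point.

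Next I would verify that this structure survives the midpoint averaging prescribed in (\ref{definition_D}). Since $M$ is a constant matrix, each entry $d^\ell$ is the product of a fixed entry of $M$ with the scalar $g({\bf W})$, so the averaged midpoint matrix appearing in the expansion used to prove the previous corollary equals
\[
\tfrac{1}{2}\big(g({\bf W}_{\bf j})+g({\bf W}_{{\bf j}\pm{\bf e}_k})\big)\,M,
\]
which is once more a non-negative scalar times $M$. By the first step it is therefore positive semi-definite, so each summand in that expansion --- a quadratic form in $(\delta_k U,\delta_k V)^\top$ built from this matrix --- is non-negative. Summing, this is precisely condition (\ref{cdts:functions_semiimplicit}).

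Finally, with (\ref{cdts:functions_semiimplicit}) established, Theorem~\ref{thrm:L2_stab_conditions} yields unconditional stability for $\theta\in[\tfrac12,1]$; equivalently, one simply invokes the preceding corollary, whose hypothesis has just been checked. I do not anticipate a genuine obstacle: the whole content is the scalar-multiple closure of the first step, and the only point deserving a moment's care is confirming that the averaging in (\ref{definition_D}) does not destroy the factored form $g M$ --- which it does not, since the constant $M$ pulls out of the average and leaves a non-negative averaged coefficient.
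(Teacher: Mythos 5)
Your proposal is correct and follows the paper's own route: the paper proves this corollary by observing that it "follows immediately from the previous corollary," which is exactly your reduction via the scalar-multiple closure of positive semi-definiteness. The extra care you take with the midpoint averaging in (\ref{definition_D}) is a welcome elaboration of a detail the paper leaves implicit, but it is not a different argument.
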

	\begin{proof}
		Follows immediately from the previous corollary.
	\end{proof}
	
	\begin{corollary}
		If the functions $d^1,d^2,d^3$ and $d^4$ satisfy
		\begin{equation}
			\label{cdts:steptime_semiimplicit_sufficient}
			\begin{aligned}
				&d^1(x,y,t)\geq \tfrac{1}{2}|d^2(x,y,t)+d^3(x,y,t)|,\quad \forall (x,y,t)\in \mathbb{R}^2\times\mathbb{R^+},\\
				&d^4(x,y,t)\geq \tfrac{1}{2}|d^2(x,y,t)+d^3(x,y,t)|,\quad \forall (x,y,t)\in \mathbb{R}^2\times\mathbb{R^+},
			\end{aligned}
		\end{equation}
		then the scheme (\ref{finite_difference_scheme}) is unconditionally stable for any $\theta\in [\frac{1}{2},1]$. 
	\end{corollary}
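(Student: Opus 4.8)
The plan is to reduce everything to the first corollary above, which already establishes unconditional stability for $\theta\in[\tfrac12,1]$ whenever the cross-diffusion matrix (\ref{cross-diffusion matrix}) is positive semi-definite, not necessarily symmetric. So it suffices to show that the pointwise hypotheses (\ref{cdts:steptime_semiimplicit_sufficient}) force $D(x,y,t)$ to be positive semi-definite in that sense at every $(x,y,t)\in\mathbb{R}^2\times\mathbb{R}^+$; the averaging (\ref{definition_D}) that defines the discrete coefficients then preserves the property, since it only replaces $D$ by convex combinations of its values at neighbouring grid points and the set of such matrices is a convex cone.

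First I would use the elementary identity $x^\top D x = x^\top \tfrac12(D+D^\top)\, x$, valid for all $x\in\mathbb{R}^2$, which shows that $D$ is positive semi-definite, not necessarily symmetric, if and only if its symmetric part
\begin{equation*}
	D_s=\tfrac12\left(D+D^\top\right)=\begin{bmatrix} d^1 & \tfrac{d^2+d^3}{2} \\ \tfrac{d^2+d^3}{2} & d^4 \end{bmatrix}
\end{equation*}
is positive semi-definite in the usual symmetric sense. For a symmetric $2\times2$ matrix this is equivalent to the three scalar conditions $d^1\geq 0$, $d^4\geq 0$ and $\det D_s = d^1 d^4 - \big(\tfrac{d^2+d^3}{2}\big)^2\geq 0$.

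Next I would verify these three inequalities directly from (\ref{cdts:steptime_semiimplicit_sufficient}). Since $\tfrac12|d^2+d^3|\geq 0$, the two hypotheses immediately give $d^1\geq 0$ and $d^4\geq 0$. Moreover, both $d^1$ and $d^4$ bound the same non-negative quantity $\tfrac12|d^2+d^3|$ from above, so multiplying the two inequalities (all of whose sides are non-negative) yields
\begin{equation*}
	d^1 d^4 \geq \Big(\tfrac12|d^2+d^3|\Big)^2 = \Big(\tfrac{d^2+d^3}{2}\Big)^2,
\end{equation*}
which is exactly $\det D_s\geq 0$. Hence $D_s$, and therefore $D$, is positive semi-definite at every $(x,y,t)$, and the first corollary above delivers the claimed unconditional stability for $\theta\in[\tfrac12,1]$.

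There is no serious obstacle: the content is the reduction to the symmetric-part criterion together with the observation that the quadratic form only sees the symmetrized off-diagonal entry $\tfrac{d^2+d^3}{2}$, which is precisely why the hypotheses are framed through $|d^2+d^3|$ rather than $|d^2|$ and $|d^3|$ separately. The one point deserving a word of care is the passage from the continuous pointwise condition to the averaged discrete coefficients $d^\ell({\bf W}^m)^{m+\theta}_{{\bf j}\pm(1/2){\bf e}_k}$, but this is handled by the convexity of the cone of positive semi-definite (not necessarily symmetric) matrices, or equivalently by a direct triangle-inequality estimate on (\ref{definition_D}), so that the averaged matrix inherits the same diagonal-dominance-type bounds and the first corollary applies verbatim.
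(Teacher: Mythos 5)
Your proof is correct and follows essentially the same route as the paper: both reduce to the first corollary by passing to the symmetric part $\tfrac12(D+D^\top)$ and observing that the hypotheses make it positive semi-definite. The only cosmetic difference is that the paper cites row diagonal dominance as the sufficient condition for positive semi-definiteness, whereas you verify it directly via the $2\times 2$ principal-minor criterion (non-negative diagonal and non-negative determinant), which amounts to the same computation.
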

	\begin{proof}
		Follows from the first corollary by realizing that a matrix $A$ is positive semi-definite in $\mathbb{R}^n$ if and only if its symmetric part $\frac{A+A^\top}{2}$ is positive semi-definite, and that conditions (\ref{cdts:steptime_semiimplicit_sufficient}) force the symmetric part to be diagonally dominant by rows, which is a sufficient condition for positive semi-definiteness \cite{Quarteroni07}.
	\end{proof}

	\begin{corollary}
		If the cross-diffusion matrix (\ref{cross-diffusion matrix}) is of the form 
		$$D(u,v,t)=\begin{bmatrix}
			g(u,v,t) & -f(u,v,t)\\
			f(u,v,t) & g(u,v,t)
		\end{bmatrix},$$
	for some real valued function $f$ and positive real valued function $g$, and $\Delta t$ is such that $\frac{4\Delta t}{h_k^2}\max \frac{g^2+f^2}{g}<1$, then the scheme \ref{finite_difference_scheme} is conditionally stable for the case $\theta=0$.
	\end{corollary}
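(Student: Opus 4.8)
The plan is to verify the sufficient condition (\ref{cdts:functions_explicit}) of Theorem \ref{thrm:L2_stab_conditions} for this particular cross-diffusion matrix and then read off the induced time-step restriction. Here $d^1=d^4=g$, $d^2=-f$ and $d^3=f$, each understood as evaluated at the relevant midpoints through (\ref{definition_D}). Since the corollary interprets stability in the sense of the stability definition, the goal is to show that once $\Delta t$ is small relative to $h_k^2$, condition (\ref{cdts:functions_explicit}) holds and the energy bound (\ref{duhamel}) follows.

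First I would examine the diffusive part of (\ref{cdts:functions_explicit}), namely $\sum_{k=1}^2 (d^1\delta_k U,\delta_k U)_{h_k^*}+(d^2\delta_k V,\delta_k U)_{h_k^*}+(d^3\delta_k U,\delta_k V)_{h_k^*}+(d^4\delta_k V,\delta_k V)_{h_k^*}$. Because each inner product $(\cdot,\cdot)_{h_k^*}$ is a weighted sum of pointwise products at the midpoints, and because at each midpoint $d^2=-f$ and $d^3=f$ differ only by sign, the two cross terms cancel pointwise, so that $(d^2\delta_k V,\delta_k U)_{h_k^*}+(d^3\delta_k U,\delta_k V)_{h_k^*}=0$. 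The antisymmetric part therefore contributes nothing, and the diffusive part reduces to $\sum_{k=1}^2 (g\,\delta_k U,\delta_k U)_{h_k^*}+(g\,\delta_k V,\delta_k V)_{h_k^*}$, i.e.\ a weighted sum of $g\big((\delta_k U)^2+(\delta_k V)^2\big)$.

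Next I would treat the explicit-correction part. Setting $\eta_{1k}=\eta_{2k}=\eta$ (which is allowed, since the theorem only asks for the existence of positive parameters) and writing $a=\delta_k U$, $b=\delta_k V$ at a generic midpoint, the definitions of the $h_k^*$ norms and inner products turn the two bracketed groups into the pointwise squares $(ga-fb)^2$ and $(fa+gb)^2$. The key algebraic step is the identity $(ga-fb)^2+(fa+gb)^2=(g^2+f^2)(a^2+b^2)$, which collapses the correction into $\tfrac{4\Delta t}{h_k^2}(1+\eta)(g^2+f^2)(a^2+b^2)$. Combining with the diffusive part, the pointwise $k$-summand of (\ref{cdts:functions_explicit}) is a nonnegative multiple of $g-\tfrac{4\Delta t}{h_k^2}(1+\eta)(g^2+f^2)$; since $g>0$, its nonnegativity is equivalent to $\tfrac{4\Delta t}{h_k^2}(1+\eta)\tfrac{g^2+f^2}{g}\leq 1$ at every midpoint and for each $k$.

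Finally, I would exploit the strictness of the hypothesis $\tfrac{4\Delta t}{h_k^2}\max\tfrac{g^2+f^2}{g}<1$ to choose $\eta>0$ small enough that $\tfrac{4\Delta t}{h_k^2}(1+\eta)\max\tfrac{g^2+f^2}{g}\leq 1$ still holds for both $k$; this validates (\ref{cdts:functions_explicit}) with these $\eta_{1k},\eta_{2k}$, so Theorem \ref{thrm:L2_stab_conditions} delivers the bound (\ref{duhamel}). As the argument forces $\Delta t$ to obey a CFL-type restriction of the form $\Delta t\leq C(h)$, the outcome is conditional, not unconditional, stability. I expect the only delicate point to be the bookkeeping that rewrites the inner-product and norm expressions as clean pointwise squares, so that the cancellation of the antisymmetric terms and the identity $(ga-fb)^2+(fa+gb)^2=(g^2+f^2)(a^2+b^2)$ can be applied termwise; once these are in place the remaining comparison is immediate.
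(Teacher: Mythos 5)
Your proposal is correct and follows essentially the same route as the paper: substitute $d^1=d^4=g$, $d^2=-f$, $d^3=f$ into condition (\ref{cdts:functions_explicit}), take $\eta_{1k}=\eta_{2k}=\eta$, observe that the antisymmetric cross terms cancel, and absorb the explicit correction into the diffusive term via the bound $\frac{g^2+f^2}{g}\leq\max\frac{g^2+f^2}{g}$, choosing $\eta$ small by strictness of the hypothesis. The only cosmetic difference is that you collapse each bracketed group into the pointwise squares $(ga-fb)^2+(fa+gb)^2=(g^2+f^2)(a^2+b^2)$, whereas the paper keeps the four norms separate before applying the same maximum bound.
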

	\begin{proof}
		Replace $d^1$ and $d^4$ by $g$, $d^2$ by $-f$ and $d^3$ by $f$, and consider $\eta_{1k}=\eta_{2k}=:\eta$ in (\ref{cdts:functions_explicit}) to obtain
		\begin{equation*}
					\begin{aligned}
						\sum\limits_{k=1}^2  (g\delta_k U, \delta_k U)_{h_k^*}+(g\delta_k V, \delta_k V)_{h_k^*}&-\frac{4\Delta t(1+\eta)}{h_k^2}\big(\|g\delta_{k}U\|_{h_k^*}^2+\|f\delta_{k}V\|_{h_k^*}^2+\|f\delta_{k}U\|_{h_k^*}^2+\|g\delta_{k}V\|_{h_k^*}^2 \big)\\
						&\geq\sum\limits_{k=1}^2  \|g^{1/2}\delta_k {\bf W}\|_{h_k^*}^2-\frac{4\Delta t(1+\eta)}{h_k^2}\max \frac{g^2+f^2}{g}\|g^{1/2}\delta_k{\bf W}\|_{h_k^*}^2\\
						&=\sum\limits_{k=1}^2  \big( 1-\frac{4\Delta t(1+\eta)(g^2+f^2)}{h_k^2g}\big)\|g^{1/2}\delta_k{\bf W}\|_{h_k^*}^2\\
						&\geq 0,
					\end{aligned}
		\end{equation*} 
	choosing $\eta$ small enough.
	\end{proof}

	\section{Operator splittings}
	\label{section:splittings}

	The stability conditions suggest that there is a class of functions for which explicit implementations of cross-diffusion processes require unpractical small time steps. However, implicit methods are numerically expensive to solve due to the dimension of the spatial variable. As mentioned before, we will focus on Additive Operator Schemes and Additive Multiplicative Operator Schemes.
	
	\subsection{Splitting models and $L^2$ stability}
	
	Applying the AOS technique \cite{Weickert98} to our finite difference scheme (\ref{finite_difference_scheme}) translates into splitting the spatial operators in each direction, calculate each directional time step and then average the resulting fractional steps. We shall henceforth refer to it as the AOS-CD scheme:  
	
	\begin{subequations}
		\label{aos_scheme}
		\begin{equation}
			\label{aos_fractionalstep}
			\begin{cases}
				\frac{U_{\bf j}^{m+1,k}-U_{\bf j}^{m}}{2\Delta t} =
				 \delta_{k}\left(d^1({\bf W}^m)_{\bf j}^{m+\theta}\delta_{k}U_{{\bf j}}^{m+\theta}+d^2({\bf W}^m)_{\bf j}^{m+\theta}\delta_{k}V_{{\bf j}}^{m+\theta}\right) -\frac{1}{2}\lambda_1({\bf W}^m)_{\bf j}^{m+\theta} (U_{\bf j}^{m+\theta}-U_{\bf j}^0)\\
				\frac{V_{\bf j}^{m+1,k}-V_{\bf j}^{m}}{2\Delta t} =
				 \delta_{k}\left(d^3({\bf W}^m)_{\bf j}^{m+\theta}\delta_{k}U_{{\bf j}}^{m+\theta}+d^4({\bf W}^m)_{\bf j}^{m+\theta}\delta_{k}V_{{\bf j}}^{m+\theta}\right)-\frac{1}{2}\lambda_2({\bf W}^m)_{\bf j}^{m+\theta} (V_{\bf j}^{m+\theta}-V_{\bf j}^0)
			\end{cases} 
		\end{equation}
		for $k=1,2$, and
		\begin{equation}
			\label{aos_averagestep}
			\qquad U_{\bf j}^{m+1}=\frac{1}{2}\sum_{k=1}^{2}U_{\bf j}^{m+1,k},\quad 
			V_{\bf j}^{m+1}=\frac{1}{2}\sum_{k=1}^{2}V_{\bf j}^{m+1,k}.
		\end{equation}
	\end{subequations}
	
	Before moving on to the numerical considerations for this new AOS-CD scheme, we introduce the other splitting technique that we will use to fasten the implicit part of (\ref{finite_difference_scheme}). The motivation is the fact that despite their efficiency and stability, AOS schemes have limited accuracy. An application of AMOS technique \cite{Bara01} to our problem leads to the following AMOS-CD schemes
	
	\begin{subequations}
		\label{amos_scheme}
		\begin{equation}
			\label{amos_factorizationstep1}
			\begin{cases}
				\begin{aligned}
					&\frac{U_{\bf j}^{m+1,*}-U_{\bf j}^{m}}{\Delta t} =
					\delta_{1}\left(d^1({\bf W}^m)_{\bf j}^{m+\theta}\delta_{1}U_{{\bf j}}^{m+\theta}+d^2({\bf W}^m)_{\bf j}^{m+\theta}\delta_{1}V_{{\bf j}}^{m+\theta}\right)-\frac{1}{2}\lambda_1({\bf W}^m)_{\bf j}^{m+\theta} (U_{\bf j}^{m+\theta}-U_{\bf j}^0) \\
					&\frac{V_{\bf j}^{m+1,*}-V_{\bf j}^{m}}{\Delta t} =
					\delta_{1}\left(d^3({\bf W}^m)_{\bf j}^{m+\theta}\delta_{1}U_{{\bf j}}^{m+\theta}+d^4({\bf W}^m)_{\bf j}^{m+\theta}\delta_{1}V_{{\bf j}}^{m+\theta}\right)-\frac{1}{2}\lambda_2({\bf W}^m)_{\bf j}^{m+\theta} (V_{\bf j}^{m+\theta}-V_{\bf j}^0)\\ \\
					&\frac{U_{\bf j}^{m+1,\star}-U_{\bf j}^{m}}{\Delta t} =
					\delta_{2}\left(d^1({\bf W}^m)_{\bf j}^{m+\theta}\delta_{2}U_{{\bf j}}^{m+\theta}+d^2({\bf W}^m)_{\bf j}^{m+\theta}\delta_{2}V_{{\bf j}}^{m+\theta}\right)-\frac{1}{2}\lambda_1({\bf W}^m)_{\bf j}^{m+\theta} (U_{\bf j}^{m+\theta}-U_{\bf j}^0)\\
					&\frac{V_{\bf j}^{m+1,\star}-V_{\bf j}^{m}}{\Delta t} =
					\delta_{2}\left(d^3({\bf W}^m)_{\bf j}^{m+\theta}\delta_{2}U_{{\bf j}}^{m+\theta}+d^4({\bf W}^m)_{\bf j}^{m+\theta}\delta_{2}V_{{\bf j}}^{m+\theta}\right)-\frac{1}{2}\lambda_2({\bf W}^m)_{\bf j}^{m+\theta} (V_{\bf j}^{m+\theta}-V_{\bf j}^0)
				\end{aligned}
			\end{cases}
		\end{equation}
		for the first step,
		\begin{equation}
			\label{amos_factorizationstep2}
			\begin{cases}
				\begin{aligned}
					&\frac{U_{\bf j}^{m+1,**}-U_{\bf j}^{m+1,*}}{\Delta t} =
					\delta_{2}\left(d^1({\bf W}^m)_{\bf j}^{m+\theta}\delta_{2}U_{{\bf j}}^{m+\theta,*}+d^2({\bf W}^m)_{\bf j}^{m+\theta}\delta_{2}V_{{\bf j}}^{m+\theta,*}\right)\frac{1}{2}\lambda_1({\bf W}^m)_{\bf j}^{m+\theta} (U_{\bf j}^{m+\theta,*}-U_{\bf j}^0)\\
					&\frac{V_{\bf j}^{m+1,**}-V_{\bf j}^{m+1,*}}{\Delta t} =
					\delta_{2}\left(d^3({\bf W}^m)_{\bf j}^{m+\theta}\delta_{2}U_{{\bf j}}^{m+\theta,*}+d^4({\bf W}^m)_{\bf j}^{m+\theta}\delta_{2}V_{{\bf j}}^{m+\theta,*}\right)-\frac{1}{2}\lambda_2({\bf W}^m)_{\bf j}^{m+\theta} (V_{\bf j}^{m+\theta,*}-V_{\bf j}^0)\\ \\
					&\frac{U_{\bf j}^{m+1,\star\star}-U_{\bf j}^{m+1,\star}}{\Delta t} =
					\delta_{1}\left(d^1({\bf W}^m)_{\bf j}^{m+\theta}\delta_{1}U_{{\bf j}}^{m+\theta,\star}+d^2({\bf W}^m)_{\bf j}^{m+\theta}\delta_{1}V_{{\bf j}}^{m+\theta,\star}\right)-\frac{1}{2}\lambda_1({\bf W}^m)_{\bf j}^{m+\theta} (U_{\bf j}^{m+\theta,\star}-U_{\bf j}^0) \\
					&\frac{V_{\bf j}^{m+1,\star\star}-V_{\bf j}^{m+1,\star}}{\Delta t} =
					\delta_{1}\left(d^3({\bf W}^m)_{\bf j}^{m+\theta}\delta_{1}U_{{\bf j}}^{m+\theta,\star}+d^4({\bf W}^m)_{\bf j}^{m+\theta}\delta_{1}V_{{\bf j}}^{m+\theta,\star}\right)-\frac{1}{2}\lambda_2({\bf W}^m)_{\bf j}^{m+\theta} (V_{\bf j}^{m+\theta,\star}-V_{\bf j}^0)
				\end{aligned}
			\end{cases}
		\end{equation}
		for the second step, and finally
		\begin{equation}
			\label{amos_average}
			U_{\bf j}^{m+1}=\frac{1}{2}(U_{\bf j}^{m+1,**}+U_{\bf j}^{m+1,\star\star}),\qquad
			V_{\bf j}^{m+1}=\frac{1}{2}(V_{\bf j}^{m+1,**}+V_{\bf j}^{m+1,\star\star}).
		\end{equation}
	\end{subequations}
	Just a quick remark regarding the factorization steps (\ref{amos_factorizationstep1}) and (\ref{amos_factorizationstep2}). In each bracket there are two independent systems of difference equations (in fact there are several of them, as we will discuss in the next section). We chose to display them in this way in order to clarify that systems (\ref{amos_factorizationstep2}) require the solutions of systems (\ref{amos_factorizationstep1}).
	
	In the next result we investigate the numerical stability of both schemes:
	
	\begin{theorem}
		\label{thrm:split_l2_stab}
		If the cross-diffusion matrix (\ref{cross-diffusion matrix}) is such that, for any $U,V\in\mathbb{R}^{N_1\times N_2}$,
		\begin{equation}\label{cdts:aos_imp_stab}
			\begin{aligned}
				(d^1\delta_1 U, \delta_1 U)_{h_1^*}+(d^2\delta_1 V, \delta_1 U)_{h_1^*}+(d^3\delta_1 U, \delta_1 V)_{h_1^*}+(d^4\delta_1 V, \delta_1 V)_{h_1^*}\geq 0,
			\end{aligned}
		\end{equation}
		then the schemes (\ref{aos_scheme}) and (\ref{amos_scheme}) are unconditionally stable for $\theta\in[\frac{1}{2},1]$.
		
		If the cross-diffusion matrix (\ref{cross-diffusion matrix}) is such that, for any $U,V\in\mathbb{R}^{N_1\times N_2}$,
		\begin{equation}
			\begin{aligned}
				\label{cdts:aos_exp_stab}
				&(d^1\delta_1 U, \delta_1 U)_{h_1^*}+(d^2\delta_1 V, \delta_1 U)_{h_1^*}+(d^3\delta_1 U, \delta_1 V)_{h_1^*}+(d^4\delta_1 V, \delta_1 V)_{h_1^*}\\
				&-\Big( \frac{4\Delta t}{h_1^2}\big((1+\eta_{1})(\|d^1\delta_1U\|_{h_1^*}^2+\|d^2\delta_1V\|_{h_1^*}^2 +2(d^1\delta_1U,d^2\delta_1V)_{h_1^*})\\
				&+(1+\eta_{2})(\|d^3\delta_1U\|_{h_1^*}^2+\|d^4\delta_1V\|_{h_1^*}^2+2(d^3\delta_1U,d^4\delta_1V)_{h_1^*}) \big)\Big) \geq 0,
			\end{aligned} 
		\end{equation}
		for some $\eta_1,\eta_2 >0$, and for $k=1,2$, then the schemes (\ref{aos_scheme}) and (\ref{amos_scheme}) are unconditionally stable for the case $\theta=0.$
	\end{theorem}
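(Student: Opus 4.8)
The plan is to reduce the stability of both splitting schemes to a one-directional version of Theorem \ref{thrm:L2_stab_conditions}, applied separately in each spatial direction, and then to recombine the fractional steps through the averaging identities (\ref{aos_averagestep}) and (\ref{amos_average}).

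First I would isolate the core estimate. Observe that each fractional step of (\ref{aos_scheme}) and each sub-step of (\ref{amos_scheme}) has exactly the structure of the full scheme (\ref{finite_difference_scheme}), but with the sum $\sum_{k=1}^2$ collapsed to a single direction $k$. I would therefore repeat the energy argument of Theorem \ref{thrm:L2_stab_conditions} for such a single-direction step: multiply the two equations by $U^{m+\theta}$ and $V^{m+\theta}$ in the inner product $(\cdot,\cdot)_h$, sum by parts to produce the direction-$k$ quadratic form, which is precisely the left-hand side of (\ref{cdts:aos_imp_stab}) (implicit case) or of (\ref{cdts:aos_exp_stab}) (explicit case), discard it by the hypothesis, and absorb the reaction terms by non-negativity of $\lambda_i$ together with the discrete Duhamel principle. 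This gives, for each directional output, a bound $\|{\bf W}^{m+1,k}\|_h^2 \le C_k\|{\bf W}^m\|_h^2$ of the same form as in Theorem \ref{thrm:L2_stab_conditions} (exponential in $t^{m+1}$). The only bookkeeping to watch is that the factor $2\Delta t$ in the denominator of (\ref{aos_fractionalstep}) and the factors $\tfrac12$ on the reaction terms reproduce exactly the coefficient $\frac{4\Delta t}{h_1^2}$ displayed in (\ref{cdts:aos_exp_stab}).

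For the AOS scheme the recombination is immediate: by the averaging step (\ref{aos_averagestep}) and convexity of the squared norm,
\begin{equation*}
	\|{\bf W}^{m+1}\|_h^2=\Big\|\tfrac12\sum_{k=1}^2{\bf W}^{m+1,k}\Big\|_h^2\le \tfrac12\sum_{k=1}^2\|{\bf W}^{m+1,k}\|_h^2,
\end{equation*}
so the per-direction bounds from the core estimate transfer directly to $\|{\bf W}^{m+1}\|_h^2$. For the AMOS scheme I would instead compose the core estimate multiplicatively: along each ordering (direction $1$ then $2$, and direction $2$ then $1$) the two sub-steps give $\|{\bf W}^{m+1,**}\|_h\le C^{(2)}\|{\bf W}^{m+1,*}\|_h\le C^{(2)}C^{(1)}\|{\bf W}^m\|_h$, after which the symmetrizing average (\ref{amos_average}) and convexity close the argument exactly as above.

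I expect the main obstacle to lie in the AMOS case. Because the reaction term of the second sub-step (\ref{amos_factorizationstep2}) is evaluated at the intermediate state ${\bf W}^{m+\theta,*}$ rather than at ${\bf W}^m$, the discrete Duhamel principle can no longer be applied to a single step but must be threaded through the composed recursion. The delicate point is to verify that the accumulated reaction contribution from the two chained sub-steps still yields a constant growing only exponentially in $t^{m+1}$, and hence harmless for the stability inequality (\ref{suli:stab_cdt}), rather than one that compounds uncontrollably across the splitting. Once this is confirmed, the multiplicative and averaging steps combine the one-directional bounds into the claimed unconditional (resp. conditional) stability.
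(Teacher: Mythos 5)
Your proposal follows essentially the same route as the paper: apply the energy argument of Theorem \ref{thrm:L2_stab_conditions} to each one-directional fractional step (the paper notes that the direction-$2$ action on $U,V$ equals the direction-$1$ action on $U^\top,V^\top$, which is why hypothesis (\ref{cdts:aos_imp_stab}) stated for direction $1$ alone suffices), then combine the fractional steps via the triangle inequality for AOS and via chained per-step estimates followed by the symmetrizing average for AMOS. Your flagged concern about threading the Duhamel argument through the composed AMOS sub-steps is exactly what the paper handles implicitly, producing the product-structured constants $a_\epsilon$, $b_\epsilon$ in its final AMOS bound.
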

	\begin{proof}
		We start with the AOS-CD scheme, and proceed as in Theorem \ref{thrm:L2_stab_conditions} for each direction, taking into account that the action in direction $2$ on $U$ and $V$ is the same as the action in direction $1$ on $U^\top$ and $V^\top$ and the triangle inequality to obtain, for $\theta\in[\frac{1}{2},1]$,
		\begin{equation*}
			\|{\bf W}^{m+1}\|_h^2\leq e^{4(\theta^2+(1-\theta)^2)\tilde{\epsilon}^{-1}\epsilon t^{m+1}}\big(1+2\tilde{\epsilon}^{-1}t^{m+1}\lambda_\text{max}\big)\|{\bf W^0}\|_h^2,
		\end{equation*}
		with $\tilde{\epsilon}<1-4\Delta t \epsilon \theta^2$ for some $\epsilon>0$. For the case $\theta=0$ the same steps as in the proof of Theorem \ref{thrm:L2_stab_conditions} yield
		\begin{equation}
			\begin{aligned}
				&\|{\bf W}^{m+1}\|_h^2\leq  e^{a_\epsilon t^{m+1}}\big(1+t^{m+1}b_\epsilon\big)\|{\bf W}^0\|_h^2,
			\end{aligned}
		\end{equation}
		with $a_\epsilon= 4\big(\epsilon+\frac{\zeta\Delta t}{2}\big)$ and $b_\epsilon= 4\big( \frac{\lambda_\text{max}^2}{4\epsilon} +\frac{\zeta\Delta t}{2} \big)$, where $\zeta=\max \{1+\eta_1^{-1},1+\eta_2^{-1}\}\lambda_\text{max}^2$ and $\epsilon >0$.
		For the AMOS-CD scheme, follow the same steps to get similar inequalities for each factorization step (\ref{amos_factorizationstep1}) and (\ref{amos_factorizationstep2}) and finish with the triangle inequality in (\ref{amos_average}). For the case $\theta\in [\frac{1}{2},1]$ we obtain
		\begin{equation*}
			\begin{aligned}
				\|{\bf W}^{m+1}\|_h^2 \leq e^{8(\theta ^2 + (1-\theta)^2)\tilde{\epsilon}^{-1}\epsilon t^{m+1}}\big(1+t^{m+1}\lambda_\text{max}\epsilon^{-1}\tilde{\epsilon}^{-1} \big)\|{\bf W}^0\|_h^2,
			\end{aligned}
		\end{equation*}
		with $\tilde{\epsilon}<1-2\Delta t \epsilon\theta^2$ for some $\epsilon>0$, while for the case $\theta=0$ we obtain 
		\begin{equation}
			\begin{aligned}
				&\|{\bf W}^{m+1}\|_h^2\leq  e^{a_\epsilon t^{m+1}}\big(1+t^{m+1}b_\epsilon\big)\|{\bf W}^0\|_h^2,
			\end{aligned}
		\end{equation}
		with $a_\epsilon= 4(\epsilon+\zeta\Delta t)(1+\Delta t)$ and $b_\epsilon= 2\big( \frac{\lambda_\text{max}}{4\epsilon} +\frac{\zeta\Delta t}{2} \big)\big( 2+2\Delta t\big(\epsilon+\zeta\Delta t \big)\big).$
	\end{proof}	
	
	\subsection{Computational considerations}
	
	We now discuss the computational implementations for each scheme. Let us start with the matrix formulation for the $\theta$-method (\ref{finite_difference_scheme}) obtained by arranging the elements of $(U,V)$ in a vector defining
	\begin{equation*}
		w:={ \begin{bmatrix}
				U_{1,1}&
				U_{2,1}&
				U_{3,1}&
				\cdots&
				U_{N_1,N_2}&
				V_{1,1}&
				V_{2,1}&
				V_{3,1}&
				\cdots&
				V_{N_1,N_2}
		\end{bmatrix}}^\top,
	\end{equation*}
	in what we call an \textit{ordering of pixels prioritizing the first direction} (order first in direction $1$ and second in direction $2$). As such, we set
	\begin{equation}
		\begin{aligned}
			&A_{11}^s:=\sum_{\vartheta_k=1}^{2}\vartheta_k^l D_k^1(w^m)^s \vartheta_k^r,\qquad A_{12}^s:=\sum_{\vartheta_k=1}^{2}\vartheta_k^l D_k^2(w^m)^s \vartheta_k^r ,\\
			&A_{22}^s:=\sum_{\vartheta_k=1}^{2}\vartheta_k^l D_k^4(w^m)^s \vartheta_k^r ,\qquad A_{21}^s:=\sum_{\vartheta_k=1}^{2}\vartheta_k^l D_k^3(w^m)^s \vartheta_k^r,
		\end{aligned}
		\label{submatricesA}
	\end{equation}
	for $s=n,n+1$, where $\vartheta_k^r$ denote the backward difference operators with respect to direction $k$, $\vartheta_k^l$ denote the forward difference operators with respect to direction $k$. $D_k^\ell(w^m)^s$ is a diagonal matrix with $[D_k^\ell(w^m)^s]_{j,j}:=d^\ell(w^m)_{{\bf j}-e_k/2}^s$, for $\ell=1,\dots,4$, following the backward average (\ref{definition_D}) in the corresponding direction $k$, and
	\begin{equation}
		\label{matrixA}
		A^s:={ \begin{bmatrix}
				A_{11}^s & A_{12}^s\\
				A_{21}^s & A_{22}^s
		\end{bmatrix}},
	\end{equation}
	and
	\begin{equation*}
		\begin{aligned}
			&\Lambda_i(w^m)^s:=\text{diag}\{\lambda_i(w^m)_{1,1}^s,\lambda_i(w^m)_{2,1}^s,\dots,\lambda_i(w^m)_{N_1,N_2}^s\},\\
			&\Lambda^s:=\text{diag}\{\Lambda_1(v^m)^s,\Lambda_2(v^m)^s\},
		\end{aligned}
	\end{equation*}
	The $\theta$-method iteration $m+1$ in (\ref{finite_difference_scheme}) is 
	\begin{equation}\label{linearsystem:full}
		\begin{aligned}
			\bigg(I-\theta\Delta t A^{m+1}-\theta\Delta t\Lambda^{m+1}\bigg)w^{m+1}=&\bigg(I+(1-\theta)\Delta t A^m+(1-\theta)\theta\Delta t\Lambda^m\bigg)w^m\\
			&+\Delta t(\theta \Lambda^{m+1}+(1-\theta)\Lambda^{m}) w^0,
		\end{aligned}
	\end{equation}
	for $m\in \mathbb{N}_0$. 
	
	On the other hand, for each of the factorization steps (\ref{aos_fractionalstep}), (\ref{amos_factorizationstep1}) and (\ref{amos_factorizationstep2}), the iteration is	
	\begin{equation}\label{linearsystem:splitted}
		\bigg(I-\theta r A_k^{m+1}+\theta r \Lambda_k^{m+1}\bigg)w_k^{m+1}=\bigg(I+(1-\theta) r A_k^m-(1-\theta) r   \Lambda_k^m\bigg)w_k^m+ r (\theta \Lambda_k^{m+1}+(1-\theta)\Lambda_k^m) w_k^0,
	\end{equation}
	where $w_k^s$ is vector $w^s$ with an orientation prioritizing direction $k$ and $\Lambda_k^s$ follows the same orientation. Here, $r$ is defined as 
	\begin{equation}
		\label{r_aos}
		r:=2\theta\Delta t
	\end{equation}
	for the AOS-CD splitting scheme, and 
	\begin{equation}
		\label{r_amos}
		r:=\theta\Delta t
	\end{equation}
	for the AMOS-CD scheme. Matrix $A_k^s$ is defined as 
	\begin{equation*}
		A_k^s:=\begin{bmatrix}
				\vartheta_k^l D_k^1(w^m)^s \vartheta_k^r & \vartheta_k^l D_k^2(w^m)^s \vartheta_k^r\\
				\vartheta_k^l D_k^3(w^m)^s \vartheta_k^r & \vartheta_k^l D_k^4(w^m)^s \vartheta_k^r
		\end{bmatrix}.
	\end{equation*}
	
	It is now verifiable the increase on efficiency when we resort to a splitting scheme. Instead of solving a full system with $2N_1N_2$ equations and unknowns in (\ref{linearsystem:full}), we now need to solve $N_\ell$ independent systems with $2N_k$ equations and unknowns in (\ref{linearsystem:splitted})), where $\ell$ is the direction orthogonal to $k$, for each factorization step (we will explore the actual speedups in Section \ref{section:speedups}). For a visual representation of this shift see Figure \ref{fig:splitting_visualization}.
	\begin{figure*}[!htb]
		\center
		\minipage{0.45\textwidth}
		\includegraphics[width=\linewidth]{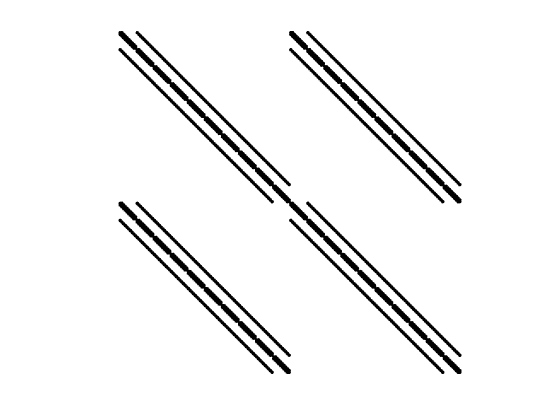}
		\endminipage
		\minipage{0.45\textwidth}
		\includegraphics[width=\linewidth]{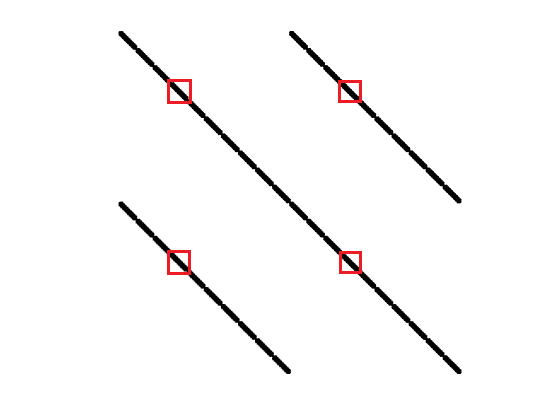}
		\endminipage
		\caption{Splitting techniques effect on the two dimensional cross-diffusion process. Left is the system matrix in the full implicit difference scheme and right is the full system matrix of the splitting technique (squares highlight an independent system of equations).}
		\label{fig:splitting_visualization}
	\end{figure*}
	Although substantial gains are achieved by this fact alone, the true strength of splitting techniques lies in the possibility of using fast forward and backward substitution system solvers such as the Thomas algorithm. To obtain an analogous implementation for the cross-diffusion case, we can reorder the unknowns in $w$ by interchanging the pixels in $U$ and $V$ in the direction to be diffused. That is, if we are solving system $i$ in direction $1$, we order the pixels as 
	\begin{equation}\label{w_ordered_blockTridiagonal}
		w={ \begin{bmatrix}
				U_{1,i}&
				V_{1,i}&
				U_{2,i}&
				V_{2,i}&
				\cdots&
				U_{N_1,i}&
				V_{N_1,i}
		\end{bmatrix}}^\top,
	\end{equation}
	while for solving system $j$ in direction $2$ we order the pixels as
	\begin{equation}\label{w_ordered_2_blockTridiagonal}
		w={ \begin{bmatrix}
				U_{j,1}&
				V_{j,1}&
				U_{j,2}&
				V_{j,2}&
				\cdots&
				U_{j,N_2}&
				V_{j,N_2}
		\end{bmatrix}}^\top,
	\end{equation}
	The system matrix becomes, at each iteration $m$,
	\begin{equation}\label{matrix:blockTridiagonal}
		A={ \begin{bmatrix}
				B_{1,i}^{m+\theta} & U_{1,i}^{m+\theta} & & & & \\
				L_{1,i}^{m+\theta} & B_{2,i}^{m+\theta} & U_{2,i}^{m+\theta} & & & \\
				& L_{2,i}^{m+\theta} & B_{3,i}^{m+\theta} & \ddots &  & \\
				& & \ddots & \ddots & \ddots & \\
				& & & \ddots & \ddots & U_{N_1-1,i}^{m+\theta} \\
				& & & & L_{N_1-1,i}^{m+\theta} & B_{N_1,i}^{m+\theta}
		\end{bmatrix}},
	\end{equation}
	with
	\begin{equation*}
		\begin{aligned}
			&B_{j,i}^{m+\theta}=\begin{bmatrix}
					1+2r d^1({\bf W}^m)^{m+\theta}_{j\pm 1/2,i}+\frac{1}{2}r\lambda_{1j,i}^{m+\theta} & 2r d^2({\bf W}^m)^{m+\theta}_{j\pm 1/2,i}\\
					2r d^3({\bf W}^m)^{m+\theta}_{j\pm 1/2,i} & 1+2r d^4({\bf W}^m)^{m+\theta}_{j\pm 1/2,i}+\frac{1}{2}r\lambda_{2j,i}^{m+\theta}
			\end{bmatrix},\\
			&B_{j,i}^{m+\theta}=\begin{bmatrix}
					1+r (d^1({\bf W}^m)^{m+\theta}_{j-1/2,i}+d^1({\bf W}^m)^{m+\theta}_{j+1/2,i})+\frac{1}{2}r\lambda_{1j,i}^{m+\theta} & r (d^2({\bf W}^m)^{m+\theta}_{j-1/2,i}+d^2({\bf W}^m)^{m+\theta}_{j+1/2,i})\\
					r (d^3({\bf W}^m)^{m+\theta}_{j-1/2,i}+d^3({\bf W}^m)^{m+\theta}_{j+1/2,i}) & \hspace{-1cm} 1+r (d^4({\bf W}^m)^{m+\theta}_{j-1/2,i}+d^4({\bf W}^m)^{m+\theta}_{j+1/2,i})+\frac{1}{2}r\lambda_{2j,i}^{m+\theta}
			\end{bmatrix},\\
			&U_{1,i}^{m+\theta}=\begin{bmatrix}
					-2r d^1({\bf W}^m)^{m+\theta}_{3/2,i} & -2r d^2({\bf W}^m)^{m+\theta}_{3/2,i}\\
					-2r d^3({\bf W}^m)^{m+\theta}_{3/2,i} & -2r d^4({\bf W}^m)^{m+\theta}_{3/2,i}
			\end{bmatrix},\\
			&L_{N-1-1,i}^{m+\theta}=\begin{bmatrix}
					-2r d^1({\bf W}^m)^{m+\theta}_{N_1-1/2,i} & -2r d^2({\bf W}^m)^{m+\theta }_{N_1-1/2,i}\\
					-2r d^3({\bf W}^m)^{m+\theta}_{N_1-1/2,i} & -2r d^4({\bf W}^m)^{m+\theta}_{N_1-1/2,i}
			\end{bmatrix},\\
			&L_{j,i}^{m+\theta}=U_{j,i}^{m+\theta}=\begin{bmatrix}
					-r d^1({\bf W}^m)^{m+\theta}_{j+1/2,i} & -r d^2({\bf W}^m)^{m+\theta}_{j+1/2,i}\\
					-r d^3({\bf W}^m)^{m+\theta}_{j+1/2,i} & -r d^4({\bf W}^m)^{m+\theta}_{j+1/2,i}
			\end{bmatrix},
		\end{aligned}
	\end{equation*}
	where we dropped the unvarying subscripts and superscript to avoid unnecessary cluttering. 
	
	From now on we will assume that the off-diagonal submatrices are not zero. If we have, by any chance, $U_j=L_{j-1}=0$ for some $j$, then all the properties we derive for $A$ can be obtained for the tridiagonal by blocks submatrices found when we separate $A$ along the row and column where those elements vanish.
	
	\subsection{Existence, uniqueness and stability of the block LU factorization}
	\label{section:lu_factorization}
	
	Let us write matrix (\ref{matrix:blockTridiagonal}) in its block $LU$ factorization (if it exists)
	\begin{equation}
		\label{blockLU_factorization}
		LU={\begin{bmatrix}
				\bar B_{1} &  & & & & \\
				L_{1} & \bar B_{2} & & & & \\
				& L_{2} & \bar B_{3} &  &  & \\
				& & \ddots & \ddots &  & \\
				& & & \ddots & \ddots &  \\
				& & & & L_{N_1-1} & \bar B_{N_1}
			\end{bmatrix}\begin{bmatrix}
				I_2 & \bar U_{1} & & & & \\
				& I_2 & \bar U_{2} & & & \\
				&  & I_2 & \ddots &  & \\
				& &  & \ddots & \ddots & \\
				& & &  & \ddots & \bar U_{N_1-1} \\
				& & & &  & I_2
		\end{bmatrix}},
	\end{equation}
	with $I_2$ the $2\times 2$ square identity matrix and
	\begin{equation}
		\label{blockLU_decomposition_steps}
		\begin{aligned}
			&\bar B_{1}=B_{1},\\
			& \bar B_{j}\bar U_{j}=U_{j},\quad j=1,\dots,N_1-1,\\
			& \bar B_{j+1}=B_{j+1}-L_{j}\bar U_{j},\quad j=1,\dots,N_1-1,
		\end{aligned}
	\end{equation}
	
	In the following series of results, we answer the questions of when and how this factorization works. We start by proving the existence and uniqueness of the block $LU$ factorization, whose conditions motivate further results. All the results will be taken on matrix (\ref{matrix:blockTridiagonal}) corresponding to the action on direction $1$, but they extend to other directions without any further ado.
	
	\begin{lemma}
		\label{lemma:blockuniqueexists}
		If the block matrices in (\ref{matrixA}) are such that
		\begin{enumerate}
			\item $B_j$ are nonsingular,
			\item $||B_1^{-1}U_1||<1$,
			\item $||B_j^{-1}U_j||+||B_j^{-1}L_{j-1}||\leq 1$,
		\end{enumerate}
		then the block $LU$ factorization (\ref{blockLU_factorization}) (\ref{blockLU_decomposition_steps}) exists and is unique. Furthermore, the resulting matrices $\bar B_j$ are non singular and satisfy
		\begin{equation*}
			\| \bar B_j ^{-1} U_j\| < 1,
		\end{equation*}
		for all $j=1,\dots,N_1$.
	\end{lemma}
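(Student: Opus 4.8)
The plan is to prove all three assertions — existence and uniqueness of the factorization, nonsingularity of the $\bar B_j$, and the bound $\|\bar B_j^{-1}U_j\|<1$ — simultaneously by a single induction on $j$. The recursion (\ref{blockLU_decomposition_steps}) makes this the natural route: each $\bar U_j=\bar B_j^{-1}U_j$ can be formed only once $\bar B_j$ is known to be invertible, and $\bar B_{j+1}$ is then determined from $\bar B_j$. Matching the blocks of the product $LU$ in (\ref{blockLU_factorization}) against (\ref{matrix:blockTridiagonal}) shows these recursions are forced by the prescribed sparsity pattern and the choice of $I_2$ on the diagonal of the upper factor, so existence and uniqueness reduce entirely to verifying that every $\bar B_j$ produced along the way is nonsingular.

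For the base case I would set $\bar B_1=B_1$, nonsingular by hypothesis~1, and define $\bar U_1=\bar B_1^{-1}U_1$; hypothesis~2 gives $\|\bar U_1\|=\|B_1^{-1}U_1\|<1$ directly. For the inductive step, assuming $\bar B_j$ is nonsingular with $\|\bar U_j\|=\|\bar B_j^{-1}U_j\|<1$, I would factor
\[
\bar B_{j+1}=B_{j+1}-L_j\bar U_j=B_{j+1}\bigl(I_2-B_{j+1}^{-1}L_j\bar U_j\bigr),
\]
using that $B_{j+1}$ is invertible by hypothesis~1. The key estimate is
\[
\|B_{j+1}^{-1}L_j\bar U_j\|\le\|B_{j+1}^{-1}L_j\|\,\|\bar U_j\|<1,
\]
where $\|B_{j+1}^{-1}L_j\|\le 1-\|B_{j+1}^{-1}U_{j+1}\|\le 1$ comes from hypothesis~3 and $\|\bar U_j\|<1$ from the inductive hypothesis. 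The standard Neumann-series bound then shows $I_2-B_{j+1}^{-1}L_j\bar U_j$ is invertible, so $\bar B_{j+1}$ is nonsingular and $\bar U_{j+1}=\bar B_{j+1}^{-1}U_{j+1}$ is well defined.

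It remains to propagate the strict bound. Writing $\alpha=\|B_{j+1}^{-1}U_{j+1}\|$, $\beta=\|B_{j+1}^{-1}L_j\|$ and $\gamma=\|\bar U_j\|$, the factorization above together with $\|(I_2-M)^{-1}\|\le(1-\|M\|)^{-1}$ gives
\[
\|\bar B_{j+1}^{-1}U_{j+1}\|\le\frac{\alpha}{1-\beta\gamma}.
\]
Hypothesis~3 reads $\alpha+\beta\le 1$, hence $1-\beta\gamma-\alpha\ge\beta(1-\gamma)$; since $\gamma<1$ and the off-diagonal blocks are assumed nonzero — so that $\beta=\|B_{j+1}^{-1}L_j\|>0$ because $B_{j+1}$ is invertible — the right-hand side is strictly positive and the ratio is strictly below $1$. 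This closes the induction and yields $\|\bar B_j^{-1}U_j\|<1$ for $j=1,\dots,N_1-1$.

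The main obstacle I anticipate is precisely this final strict inequality: hypothesis~3 by itself only forces $\alpha\le 1$, so one could conclude no more than $\|\bar B_{j+1}^{-1}U_{j+1}\|\le 1$ if $\beta$ were allowed to vanish. The strictness genuinely relies on the standing assumption that the off-diagonal blocks do not vanish (giving $\beta>0$) combined with the strict bound $\gamma<1$ carried through the induction, which is in turn seeded by the strict hypothesis~2 at $j=1$. Everything else is routine submultiplicativity of the operator norm and the invertibility of $I_2-M$ when $\|M\|<1$.
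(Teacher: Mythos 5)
Your proof is correct and follows essentially the same route as the paper: the same factorization $\bar B_{j+1}=B_{j+1}(I_2-B_{j+1}^{-1}L_j\bar U_j)$, the same Neumann-series invertibility argument, and the same propagation of the bound $\|\bar B_j^{-1}U_j\|<1$ by induction. If anything, your explicit remark that strictness of the final inequality requires $\|B_{j+1}^{-1}L_j\|>0$ (guaranteed by the standing assumption that off-diagonal blocks are nonzero) makes a point the paper's own chain of inequalities leaves implicit.
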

	\begin{proof}
		Consider the first step of the $LU$ factorization in (\ref{blockLU_decomposition_steps}). It leads to the decomposition
		$$A={\begin{bmatrix}
				\bar B_1 & 0\\
				L_1 & S
			\end{bmatrix}\begin{bmatrix}
				I_2 &\bar U_1 \\
				0 & I_{2N_1-2}
		\end{bmatrix}},\quad \text{where }S={\begin{bmatrix}
				\bar B_2 & U_2 & & & \\
				L_2 & B_3 & \ddots & & \\
				& \ddots & \ddots & \ddots & \\
				& & \ddots & \ddots & U_{N_1-1} \\
				& & & L_{N_1-1} & B_{N_1}
		\end{bmatrix}},$$
		and $\bar B_2 =B_2 - L_1\bar B_1^{-1} U_1=B_2(I_2-B_2^{-1}L_1\bar B_1^{-1}U_1)$. By hypothesis 2 and 3 we have that
		$$\|B_2^{-1}L_1\bar B_1^{-1}U_1\|\leq\|B_2^{-1}L_1\|\|\bar B_1^{-1}U_1\|<1$$
		and therefore $(I_2-B_2^{-1}L_1\bar B_1^{-1}U_1)^{-1}$ exists \cite{Dym08}, and 
		$$\|(I_2-B_2^{-1}L_1\bar B_1^{-1}U_1)^{-1}\|\leq \frac{1}{1-\|L_1B_2^{-1}\|\|\bar B_1U_1\|}.$$
		By hypothesis 1, $B_2$ is also nonsingular, so $\bar{B}_2$ is nonsingular and, using hypothesis 3,
		\begin{equation*}
			\begin{aligned}
				\| \bar B_2^{-1} U_2\| &=\| (I_2-B_2^{-1}L_1\bar B_1^{-1}U_1)^{-1}B_2^{-1} U_1\|\\
				&\leq \frac{\| B_2^{-1} U_2\|}{1-\|B_2^{-1}L_1\|\|\bar B_1^{-1}U_1\|}\\
				&\leq \frac{1-\| B_2^{-1} L_1\|}{1-\|B_2^{-1}L_1\|\|\bar B_1^{-1}U_1\|}\\
				&< \frac{1-\| B_2^{-1} L_1\|}{1-\|B_2^{-1}L_1\|}=1.
			\end{aligned}
		\end{equation*}
		As the factorization now proceeds independently in matrix $S$, the result follows by induction.
	\end{proof}
	
	This lemma requires, among other conditions, the non singularity of matrices $B_j$, which can be obtained under fairly nonrestrictive constraints.
	
	\begin{lemma}
		\label{lemma:inv_submatrices}
		If the cross-diffusion matrix (\ref{cross-diffusion matrix}) is positive semi-definite, not necessarily symmetric, and $\lambda_1,\lambda_2>0$, then the diagonal submatrices of matrix (\ref{matrix:blockTridiagonal}) are nonsingular.
	\end{lemma}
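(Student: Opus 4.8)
The plan is to show that every diagonal block $B_{j,i}^{m+\theta}$ of (\ref{matrix:blockTridiagonal}) is positive definite in the not-necessarily-symmetric sense, and then to invoke the elementary fact that such a matrix is nonsingular: if $B x = 0$ for some $x\neq 0$, then $x^\top B x = 0$, contradicting positive definiteness. Thus it suffices to establish the quadratic-form inequality $x^\top B_{j,i}^{m+\theta} x > 0$ for every $x \in \mathbb{R}^2\setminus\{0\}$.

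First I would rewrite each diagonal block in the form
\begin{equation*}
	B_{j,i}^{m+\theta} = I_2 + r\,\hat{D}_{j,i} + \tfrac{1}{2}r\,\Lambda_{j,i},
\end{equation*}
where $r>0$ in the implicit regime (the only one in which the factorization is used), $\Lambda_{j,i}=\mathrm{diag}(\lambda_{1j,i}^{m+\theta},\lambda_{2j,i}^{m+\theta})$ has positive diagonal by hypothesis, and $\hat{D}_{j,i}$ collects the half-point diffusion contributions: $\hat{D}_{j,i}=D_{j-1/2,i}+D_{j+1/2,i}$ for an interior block and $\hat{D}_{j,i}=2D_{j\pm1/2,i}$ for a boundary block, with $D_{j\pm1/2,i}$ the cross-diffusion matrix assembled from the averaged influence functions $d^\ell({\bf W}^m)_{j\pm1/2,i}^{m+\theta}$.

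Next I would check that $\hat{D}_{j,i}$ is positive semi-definite, not necessarily symmetric. By the averaging rule (\ref{definition_D}), each half-point matrix $D_{j\pm1/2,i}$ equals the arithmetic mean $\tfrac{1}{2}\big(D({\bf W}^m_{\bf j}) + D({\bf W}^m_{{\bf j}\pm {\bf e}_k})\big)$ of the cross-diffusion matrix at two grid points. Since any nonnegative linear combination of positive semi-definite matrices is again positive semi-definite (the quadratic form distributes over the sum), both $D_{j\pm1/2,i}$ and hence $\hat{D}_{j,i}$ inherit positive semi-definiteness from the hypothesis on (\ref{cross-diffusion matrix}). Combining this with the previous step, for any nonzero $x=(x_1,x_2)^\top$ we obtain
\begin{equation*}
	x^\top B_{j,i}^{m+\theta} x = \|x\|^2 + r\,x^\top \hat{D}_{j,i}\,x + \tfrac{1}{2}r\big(\lambda_{1j,i}^{m+\theta}x_1^2 + \lambda_{2j,i}^{m+\theta}x_2^2\big) \geq \|x\|^2 > 0,
\end{equation*}
because the last two terms are non-negative.

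I do not expect a serious obstacle here: the identity block arising from the time discretization single-handedly forces strict positivity of the quadratic form, so the diffusion and reaction terms, which are only known to be non-negative, merely need not subtract from it. The only points requiring mild care are to treat the boundary diagonal blocks (which carry a factor $2r$ rather than $r$) on the same footing as the interior ones, and to recall that $r>0$ throughout the implicit range $\theta\in[\tfrac12,1]$ in which the block $LU$ factorization is actually employed.
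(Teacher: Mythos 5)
Your proof is correct and follows essentially the same route as the paper's: both reduce the claim to positive definiteness of the quadratic form (equivalently, of the symmetric part $\frac{B+B^\top}{2}$) of each diagonal block, with strict positivity supplied by the identity and reaction terms and non-negativity of the diffusion contribution following from positive semi-definiteness of the averaged half-point cross-diffusion matrices. The only cosmetic difference is the final step: the paper passes through the positivity of the real parts of the eigenvalues of $B_j$ (citing Browne), whereas you use the more elementary observation that $B_jx=0$ with $x\neq 0$ would force $x^\top B_j x=0$.
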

	\begin{proof}
		We check the nonsingularity of $B_1$. The proof for the remaining submatrices is exactly the same. Set $k_{1j}=\frac{1}{2r}(1+\frac{1}{2}r\lambda_{1j})$ and $k_{2j}=\frac{1}{2r}(1+\frac{1}{2}r\lambda_{2j})$. Let $\frac{B_1+B_1^\top}{2}$ be the hermitian part of $B_1$. Noticing that
		$$\frac{B_1+B_1^\top}{2}={\begin{bmatrix}
				k_{1j}+r(d^1_j+d^1_{j+1}) & \frac{r}{2}(d^2_j+d^3_j+d^2_{j+1}+d^2_{j+1})\\
				\frac{r}{2}(d^2_j+d^3_j+d^2_{j+1}+d^2_{j+1}) & k_{2j}+r(d^4_j+d^4_{j+1})
		\end{bmatrix}},$$
		and since the cross-diffusion matrix is positive semi-definite not necessarily symmetric, its hermitian part is positive semi-definite and so $\frac{B_1+B_1^\top}{2}$ is positive definite. This implies that all its eigenvalues are positive. But then the real part of the eigenvalues of $B_1$ are positive which implies that $0$ is not an eigenvalue of $B_1$ \cite{Browne30}.
	\end{proof}
	
	We now proceed on checking the remaining conditions of Lemma \ref{lemma:blockuniqueexists}. We require some known results, which we provide in the following lemma. The first two propositions are exercise 12.9 and Lemma 12.29 from Dym \cite{Dym08}. The third proposition is Theorem 7.7.2 by Horn and Johnson \cite{Horn12}.
	
	\begin{lemma}
		\label{lemma:books_props}
		Let $A,S\in \mathbb{C}^{n\times n}$ and $\alpha\in\mathbb{R}^+$. Then
		\begin{enumerate}
			\item $|| A || < 1 \iff I- A^\top A\text{ is positive definite}\iff I-AA^\top\text{ is positive definite}$,
			\item $|| A || < \alpha \iff \alpha^2I- A^\top A\text{ is positive definite}\iff \alpha^2 I-AA^\top\text{ is positive definite}$,
			\item $A\text{ is positive definite}\iff S^\top AS \text{ is positive definite, for any nonsingular }S$.
		\end{enumerate}
	\end{lemma}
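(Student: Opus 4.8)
The plan is to dispatch the three propositions separately, since each reduces to a short standard argument; indeed all three are quoted from Dym and from Horn and Johnson above, so the aim is only to record the self-contained reductions. Throughout, $\|\cdot\|$ denotes the operator $2$-norm, for which $\|A\|^2=\lambda_{\max}(A^\top A)$, the largest eigenvalue of the symmetric positive semi-definite matrix $A^\top A$.

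For the first proposition I would start from this spectral identity: $\|A\|<1$ holds precisely when $\lambda_{\max}(A^\top A)<1$, which in turn is equivalent to every eigenvalue of the symmetric matrix $I-A^\top A$ being strictly positive, i.e. to $I-A^\top A$ being positive definite. The equivalence with $I-AA^\top$ then follows because $A^\top A$ and $AA^\top$ share the same nonzero spectrum (equivalently $\|A\|=\|A^\top\|$). I would stress that $I-A^\top A$ is automatically symmetric, so here the ordinary notion of positive definiteness applies and there is no conflict with the non-symmetric definition used elsewhere in the paper. The second proposition is then immediate by rescaling: applying the first to $\alpha^{-1}A$ and using $I-(\alpha^{-1}A)^\top(\alpha^{-1}A)=\alpha^{-2}(\alpha^2 I-A^\top A)$ transfers the equivalence up to the positive factor $\alpha^{-2}$.

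For the third proposition I would argue directly from the quadratic-form definition: for every $x$ one has $x^\top(S^\top A S)x=(Sx)^\top A(Sx)$, and since $S$ is nonsingular the map $x\mapsto Sx$ is a bijection fixing the origin, so the sign of the form is preserved on all nonzero vectors. This single line covers both the symmetric and the not-necessarily-symmetric versions simultaneously, which is exactly what is needed when the lemma is invoked. The only point demanding care --- and the mild obstacle --- is notational: the statement is phrased for complex matrices with the ordinary transpose $A^\top$, whereas the clean identity $\|A\|^2=\lambda_{\max}(A^\top A)$ is the real one (over $\mathbb{C}$ one would use the conjugate transpose). Since every matrix to which this lemma is ultimately applied is real, I would simply note that $A^\top A$ is the genuine Gram matrix in our setting and that the reductions above hold verbatim; beyond this bookkeeping I expect no difficulty, which is why citing the two textbooks is the most economical route.
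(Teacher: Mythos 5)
Your proof is correct. Note, however, that the paper does not actually prove this lemma: it is presented as a list of quoted textbook facts, with the first two propositions attributed to Dym (Exercise 12.9 and Lemma 12.29) and the third to Horn and Johnson (Theorem 7.7.2), so there is no in-paper argument to match. What you supply is a self-contained elementary derivation, and each step holds up: the identity $\|A\|^2=\lambda_{\max}(A^\top A)$ reduces the first proposition to the positivity of the eigenvalues of the symmetric matrix $I-A^\top A$; the equivalence with $I-AA^\top$ follows since for square $A$ the matrices $A^\top A$ and $AA^\top$ have the same characteristic polynomial; the second proposition is the first applied to $\alpha^{-1}A$; and the substitution $x\mapsto Sx$ in the quadratic form handles the third, correctly covering the not-necessarily-symmetric notion the paper uses. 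Your flag about $A^\top$ versus the conjugate transpose over $\mathbb{C}$ is also well placed --- the lemma as stated for complex matrices with the plain transpose is only the right formulation in the real setting where it is invoked, so restricting to real matrices (as you do) is the honest reading. In short, your route buys verifiability at the cost of a paragraph; the paper's citation buys brevity at the cost of leaving the reader to check the sources. Both are legitimate, and nothing in your argument would fail.
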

	Setting the auxiliary variables $k_{1j}:=1+\frac{1}{2}r\lambda_{1j}$, $k_{2j}:=1+\frac{1}{2}r\lambda_{2j}$ and $K_j:={\begin{bmatrix}
			k_{1j} & 0 \\
			0 & k_{2j}
	\end{bmatrix}}$, we are able to evaluate the second condition of Lemma \ref{lemma:blockuniqueexists}.
	
	\begin{lemma}
		\label{lemma:firstmatrixNormsBounds}
		Consider the matrices $B_1$ and $U_1$ in (\ref{matrix:blockTridiagonal}). If the cross-diffusion matrix (\ref{cross-diffusion matrix}) is positive semi-definite, not necessarily symmetric, and
		\begin{enumerate}
			\item $\lambda_{1}(x)=\lambda_{2}(x)$ for all $x$, then $|| B_1^{-1}U_1||<1$ for all $r\in \mathbb{R}^+$;
			\item $\lambda_{1}(x)>\lambda_{2}(x)$, for all $x$, and the matrix
			\begin{equation}
				\label{lemma4_matrix1}
				\begin{bmatrix}
					k_{1}^2(x)+4r(k_{1}(x)-k_{2}(x))d^1(x) & 2r(k_{1}(x)-k_{2}(x))d^2(x)\\
					2r(k_{1}(x)-k_{2}(x))d^2(x) & k_{2}^2(x)
				\end{bmatrix}
			\end{equation}
			is positive definite for all $x$, then $|| B_1^{-1}U_1||<1$ for all $r\in \mathbb{R}^+$;
			\item $\lambda_{2}(x)>\lambda_{1}(x)$, for all $x$, and the matrix
			\begin{equation}
				\label{lemma4_matrix2}
				\begin{bmatrix}
					k_{1}^2(x) & 2r(k_{2}(x)-k_{1}(x))d^3(x)\\
					2r(k_{2}(x)-k_{1}(x))d^3(x) & k_{2}^2(x)+4r(k_{2}(x)-k_{1}(x))d^4(x)
				\end{bmatrix}
			\end{equation}
			is positive definite for all $x$, then $|| B_1^{-1}U_1||<1$ for all $r\in \mathbb{R}^+$.
			\item $\lambda_{1}(x)-\lambda_{2}(x)$ changes sign and the above two matrices are positive definite for all $x$, then $|| B_1^{-1}U_1||<1$ for all $r\in \mathbb{R}^+$.
		\end{enumerate}
	\end{lemma}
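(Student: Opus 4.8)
The plan is to reduce the operator-norm inequality $\|B_1^{-1}U_1\|<1$ to the positive definiteness of a single symmetric $2\times 2$ matrix and then read that off from the hypotheses. Since $B_1$ is nonsingular (Lemma~\ref{lemma:inv_submatrices}), I would use the second characterization of Lemma~\ref{lemma:books_props}(1): $\|B_1^{-1}U_1\|<1$ holds if and only if $I_2-(B_1^{-1}U_1)(B_1^{-1}U_1)^\top$ is positive definite. A congruence by $B_1^\top$ (Lemma~\ref{lemma:books_props}(3), which preserves definiteness) clears the inverse and turns the claim into the positive definiteness of
\begin{equation*}
G:=B_1B_1^\top-U_1U_1^\top.
\end{equation*}
Because $B_1$ and $U_1$ are $2\times 2$, $G$ can be written out completely.

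The whole computation then hinges on one cancellation. Writing $K=\mathrm{diag}(k_1,k_2)$ and letting $D$ denote the cross-diffusion matrix (\ref{cross-diffusion matrix}) evaluated at the relevant half-point, the boundary blocks are $B_1=K+2rD$ and $U_1=-2rD$; hence $B_1B_1^\top$ and $U_1U_1^\top$ each contribute the same quadratic term $4r^2DD^\top$, which drops out. What remains is affine in $D$,
\begin{equation*}
G=K^2+2r\big(DK+KD^\top\big),
\end{equation*}
and this is the structural fact that makes the four regimes elementary, since $G$ no longer contains products of influence functions.

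From here I would extract a nonnegative multiple of the symmetric part $D+D^\top$, using that positive semi-definiteness of $D$ gives $x^\top(D+D^\top)x=2x^\top Dx\ge 0$, so $c(D+D^\top)$ is positive semi-definite for every $c\ge 0$. If $k_1=k_2=:k$ then $DK+KD^\top=k(D+D^\top)$ and $G=k^2I_2+2rk(D+D^\top)$ is a positive definite plus a positive semi-definite matrix, hence positive definite for all $r>0$ (case 1). If $k_1>k_2$, subtracting $2rk_2(D+D^\top)$ from $G$ leaves precisely the matrix (\ref{lemma4_matrix1}) (with $d^3$ in the off-diagonal; see below), so $G$ is positive definite as soon as that matrix is (case 2); the case $k_2>k_1$ is the mirror image and produces (\ref{lemma4_matrix2}) after subtracting $2rk_1(D+D^\top)$ (case 3). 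When $\lambda_1-\lambda_2$ changes sign I would argue pointwise: each $x$ falls into one of the three situations, and requiring both (\ref{lemma4_matrix1}) and (\ref{lemma4_matrix2}) to be positive definite everywhere covers them all (case 4).

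The one genuinely delicate point is a transposition bookkeeping in the reduction step. The characterization I used bounds $\|B_1^{-1}U_1\|$ and leads to the off-diagonal entry $2r(k_1-k_2)d^3$; had I instead used the $I-A^\top A$ form of Lemma~\ref{lemma:books_props}(1) (equivalently, bounded $\|U_1B_1^{-1}\|$) I would have obtained $B_1^\top B_1-U_1^\top U_1$ and the off-diagonal $2r(k_1-k_2)d^2$ written in (\ref{lemma4_matrix1}). Since $B_1$ is not symmetric when $d^2\ne d^3$, the two resulting determinant conditions differ in general, so the convention has to be pinned down to match the matrices as stated; note, however, that they coincide whenever $|d^2|=|d^3|$, which includes the antisymmetric models $d^2=-d^3$ that motivate the problem. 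The only remaining care is tracking the doubled boundary coefficient $2r$ in $B_1,U_1$ (against the interior $r$), which is what makes the extracted term equal $2r\min(k_1,k_2)(D+D^\top)$; once $G$ is in the affine form above, the identification with (\ref{lemma4_matrix1})--(\ref{lemma4_matrix2}) is a one-line entrywise comparison.
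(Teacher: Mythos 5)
Your argument is essentially the paper's: since $U_1=K_1-B_1$, your matrix $G=B_1B_1^\top-U_1U_1^\top$ is exactly the paper's $B_1K_1+K_1B_1^\top-K_1^2$, obtained there from $I-B_1^{-1}U_1(B_1^{-1}U_1)^\top=B_1^{-1}(B_1K_1+K_1B_1^\top-K_1^2)B_1^{-\top}$, and the paper then performs the same split into $2rk_{\min}(D+D^\top)$ plus the residual matrices (\ref{lemma4_matrix1})--(\ref{lemma4_matrix2}). Your remark on the $d^2$ versus $d^3$ bookkeeping is apt: the paper's displayed off-diagonal $2rk_{11}d^2_{3/2}+2rk_{21}d^3_{3/2}$ actually corresponds to $K_1B_1+B_1^\top K_1-K_1^2$ rather than to $B_1K_1+K_1B_1^\top-K_1^2$, so the published proof carries the very transposition slip you identified.
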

	\begin{proof}
		We will use Lemma \ref{lemma:books_props} thoroughly during the proof. Notice first that that $U_1=K_1-B_1$. Then $B_1^{-1}U_1=B_1^{-1}K_1-I$. We have that $||B_1^{-1}U_1||_2<1$ if and only if $I-B_1^{-1}U_1(B_1^{-1}U_1)^\top$ is positive definite. But
		\begin{equation*}
			\begin{aligned}
				I-B_1^{-1}U_1(B_1^{-1}U_1)^\top&=I-(B_1^{-1}K_1-I)(K_1^\top B_1^{-1\top}-I)\\
				&=B_1^{-1}(B_1K_1+K_1B_1^\top-K_1^2)B_1^{-1\top},
			\end{aligned}
		\end{equation*}
		so the positive definiteness of $B_1K_1+K_1B_1^\top-K_1^2$ provides the result. We can compute
		\begin{equation*}
			\begin{aligned}
				B_1K_1+K_1B_1^\top-K_1^2={\begin{bmatrix}
						k_{11}^2+4rk_{11}d_{3/2}^1 \hspace{-0.5cm}& \hspace{-0.5cm}2rk_{11}d_{3/2}^2+2rk_{21}d_{3/2}^3\\
						2rk_{11}d_{3/2}^2+2rk_{21}d_{3/2}^3 \hspace{-0.5cm}& \hspace{-0.5cm}k_{21}^2+4rk_{21}d_{3/2}^4
				\end{bmatrix}}.
			\end{aligned}
		\end{equation*}
		If $\lambda_{11}=\lambda_{21}$, $BK_1+K_1B^\top-K_1^2$ is positive definite due to the positive semi-definiteness of the hermitian part of the cross-diffusion matrix. If $\lambda_{11}>\lambda_{21}$, then $k_{11}>k_{21}$ and we can write
		\begin{equation*}
			\begin{aligned}
				BK_1+K_1B^\top-K_1^2=&2rk_{21}{\begin{bmatrix}
						2d_{3/2}^1 & d_{3/2}^2+d_{3/2}^3\\
						d_{3/2}^2+d_{3/2}^3 & 2d_{3/2}^4
				\end{bmatrix}}\\
				&\quad +
				{\begin{bmatrix}
						k_{11}^2+4r(k_{11}-k_{21})d_{3/2}^1 & 2r(k_{11}-k_{21})d_{3/2}^2 \\
						2r(k_{11}-k_{21})d_{3/2}^2 & k_{21}^2
				\end{bmatrix}},
			\end{aligned}
		\end{equation*}
		with the first summand being positive semi-definite due to the positive semi-definiteness of the hermitian part of the cross diffusion matrix. Therefore the result is obtained if the second summand is also positive definite, which can be achieved by splitting that matrix using $d^\ell_{3/2}=\frac{1}{2}(d^\ell_{1}+d^\ell_2)$ and taking into account the hypothesis.
		
		The proof for $\lambda_{21}>\lambda_{11}$ is analogous.
	\end{proof}
	
	Further restrictions on the cross-diffusion matrix are required for the third condition of Lemma \ref{lemma:blockuniqueexists}. In the following result we check that the inequality $\|B_j^{-1} U_j\|_2+\|B_j^{-1} L_{j-1}\|_2\leq 1$ holds unconditionally for a special choice of the cross-diffusion matrix (\ref{cross-diffusion matrix}), and that it holds for the general case of positive semi-definite, not necessarily symmetric, cross-diffusion matrix, as long as we take small enough time steps.
	
	\begin{lemma}
		\label{lemma:midmatricesNormsBounds}
		If the functions $\lambda_1(\cdot),$ $\lambda_2(\cdot)$ and the cross-diffusion matrix (\ref{cross-diffusion matrix}) are such that
		\begin{enumerate}
			\item $\lambda_{1}(x)=\lambda_{2}(x)$ for all $x$, or
			\item $\lambda_{1}(x)>\lambda_{2}(x)$, for all $x$, and
			\begin{equation}
				\label{lemma5_matrix1}
				\begin{bmatrix}
					k_{1}^2(x)+2r(k_{1}(x)-k_{2}(x))d^1(x) & r(k_{1}(x)-k_{2}(x))d^2(x)\\
					r(k_{1}(x)-k_{2}(x))d^2(x) & k_{2}^2(x)
				\end{bmatrix}
			\end{equation}
			is positive semi-definite for all $x$, or
			\item $\lambda_{2}(x)>\lambda_{1}(x)$, for all $x$, and
			\begin{equation}
				\label{lemma5_matrix2}
				\begin{bmatrix}
					k_{1}^2(x) & r(k_{2}(x)-k_{1}(x))d^3(x)\\
					r(k_{2}(x)-k_{1}(x))d^3(x) & k_{2}^2(x)+2r(k_{2}(x)-k_{1}(x))d^4(x)
				\end{bmatrix}
			\end{equation}
			is positive semi-definite for all $x$, or
			\item $\lambda_{1}(x)-\lambda_{2}(x)$ changes sign, and the above two matrices are positive semi-definites for all $x$,
		\end{enumerate}	
		then $\|B_j^{-1}U_j\|_2+\|B_j^{-1} L_{j-1}\|_2\leq 1$ for any choice of $r$ in (\ref{r_aos})-(\ref{r_amos}) for cross-diffusion matrices of the form $g(\cdot)M$ for some non-negative real valued function $g$ and positive semi-definite matrix $M$.
		Furthermore, for a general positive semi-definite, not necessarily symmetric, cross-diffusion matrix, it holds that $\|B_j^{-1}U_j\|_2+\|B_j^{-1} L_{j-1}\|_2\leq 1$ for sufficiently small $r$ as long as we change the requirements on the matrices in conditions 2-4 to positive definiteness.
	\end{lemma}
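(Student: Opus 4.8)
The plan is to follow the template of Lemma \ref{lemma:firstmatrixNormsBounds}, now producing a \emph{non-strict} bound on a \emph{sum} of two block norms. The starting point is the structural identity
\begin{equation*}
	U_j+L_{j-1}=K_j-B_j,
\end{equation*}
which generalizes the relation $U_1=K_1-B_1$ used there for the boundary block: the diagonal block $B_j$ collects exactly the half-point coefficients $r(d^\ell_{j-1/2}+d^\ell_{j+1/2})$ appearing (with a minus sign) in $L_{j-1}=-rD_{j-1/2}$ and $U_j=-rD_{j+1/2}$, together with the diagonal part $K_j$. For a cross-diffusion matrix of the form $g(\cdot)M$ the two off-diagonal blocks are \emph{non-negative scalar multiples} of $-rM$, namely $U_j=-rg_{j+1/2}M$ and $L_{j-1}=-rg_{j-1/2}M$, so the sum of norms collapses:
\begin{equation*}
	\|B_j^{-1}U_j\|_2+\|B_j^{-1}L_{j-1}\|_2=r(g_{j+1/2}+g_{j-1/2})\,\|B_j^{-1}M\|_2=\|B_j^{-1}(U_j+L_{j-1})\|_2=\|B_j^{-1}K_j-I\|_2,
\end{equation*}
where the last equality uses the identity above. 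Thus the problem reduces to $\|B_j^{-1}K_j-I\|_2\le 1$, exactly the quantity treated in Lemma \ref{lemma:firstmatrixNormsBounds}, but with $\le$ replacing $<$.

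I would then invoke Lemma \ref{lemma:books_props}. By the non-strict version of its first item, $\|B_j^{-1}K_j-I\|_2\le 1$ is equivalent to the matrix $I-(B_j^{-1}K_j-I)(B_j^{-1}K_j-I)^\top$ being positive semi-definite; expanding exactly as in Lemma \ref{lemma:firstmatrixNormsBounds} rewrites this as $B_j^{-1}\big(B_jK_j+K_jB_j^\top-K_j^2\big)B_j^{-\top}$, and a congruence (the semi-definite form of the third item of Lemma \ref{lemma:books_props}, applied with the nonsingular $S=B_j^{-\top}$) reduces everything to the requirement that
\begin{equation*}
	B_jK_j+K_jB_j^\top-K_j^2
\end{equation*}
be positive semi-definite.

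Next I would compute this $2\times2$ matrix and separate it into the manifestly positive semi-definite contribution carried by the hermitian part of the cross-diffusion matrix (positive semi-definite by hypothesis) plus a remainder. When $\lambda_1=\lambda_2$ one has $K_j=kI$, the remainder degenerates, and positive definiteness is automatic, so no extra hypothesis is needed. When $\lambda_1>\lambda_2$ (hence $k_{1j}>k_{2j}$) I would peel off $2rk_{2j}$ times the symmetric part of the summed cross-diffusion matrix; the surviving remainder, after substituting the averaged coefficients $d^\ell_{j\pm 1/2}$ and distributing across the contributing grid points, is matched to the matrix of condition 2, whose semi-definiteness for all $x$ then forces the remainder to be semi-definite for every $r$, yielding the ``any $r$'' conclusion. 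The case $\lambda_2>\lambda_1$ is symmetric and uses condition 3, and the sign-changing case uses both.

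Finally, for a general positive semi-definite (not necessarily symmetric) cross-diffusion matrix, $U_j$ and $L_{j-1}$ cease to be scalar multiples of a common matrix, so the collapse of the sum of norms fails and a single semi-definite inequality is no longer available. Here I would argue by perturbation: as $r\to 0$ we have $B_j\to K_j$, which is positive definite and uniformly well conditioned, while $\|U_j\|_2,\|L_{j-1}\|_2=O(r)$, so $\|B_j^{-1}U_j\|_2+\|B_j^{-1}L_{j-1}\|_2=O(r)<1$ for $r$ small. Strengthening conditions 2--4 from semi-definiteness to \emph{definiteness} serves precisely to make the admissible threshold on $r$ uniform across all blocks and independent of the mesh. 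I expect the main obstacle to be the third step: carrying out the case split and matching the remainder to (\ref{lemma5_matrix1})--(\ref{lemma5_matrix2}) with the correct powers of $r$ and the three-point averaging $d^\ell_{j-1/2}+d^\ell_{j+1/2}$, together with establishing grid-independence of the small-$r$ threshold in the general case.
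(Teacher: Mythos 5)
Your argument is correct in substance and rests on the same two pillars as the paper's proof, but it organizes the key step differently. For the $g(\cdot)M$ case the paper does \emph{not} collapse the sum: it bounds the two norms separately, using $\|B_j^{-1}U_j\|\leq\alpha\iff\alpha^2B_jB_j^\top-U_jU_j^\top\succeq0$ to show one may take $\alpha=\frac{g_{j+1/2}}{g_{j-1/2}+g_{j+1/2}}$ and $\beta=\frac{g_{j-1/2}}{g_{j-1/2}+g_{j+1/2}}$ (summing exactly to $1$) after discarding the term $K_j^2+r(g_{j-1/2}+g_{j+1/2})(MK_j+K_jM^\top)$, whose positive semi-definiteness is obtained from hypotheses 1--4 exactly as in Lemma \ref{lemma:firstmatrixNormsBounds}. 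Your exact collapse $\|B_j^{-1}U_j\|+\|B_j^{-1}L_{j-1}\|=\|B_j^{-1}(U_j+L_{j-1})\|=\|B_j^{-1}K_j-I\|$ is valid (it only needs $U_j$ and $L_{j-1}$ to be non-negative multiples of $-rM$, and the identity $U_j+L_{j-1}=K_j-B_j$ does hold on every row once the boundary factors of $2$ are accounted for), and it reduces the claim to the positive semi-definiteness of $B_jK_j+K_jB_j^\top-K_j^2=K_j^2+r(g_{j-1/2}+g_{j+1/2})(MK_j+K_jM^\top)$ --- literally the same matrix the paper needs. So the two routes assume the same thing; yours is a single, tight inequality that makes the reuse of the Lemma \ref{lemma:firstmatrixNormsBounds} computation transparent, while the paper's gives the finer individual bounds $\alpha,\beta$ even though only their sum is consumed by condition 3 of Lemma \ref{lemma:blockuniqueexists}. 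For the general positive semi-definite case both arguments are perturbative in $r$; yours is the more elementary $O(r)$ estimate, whereas the paper retains the decomposition $\alpha^2(K_jB_j^\top+B_jK_j-K_j^2)+\mathcal{O}(r^2)\succeq0$ so that the strengthened (definite) hypotheses 2--4 visibly enter. The one genuinely delicate point --- which you correctly flag, and which the paper also glosses over with ``the details are precisely the same as in the previous lemma'' --- is matching the remainder built from $d^\ell_{j-1/2}+d^\ell_{j+1/2}$ (three grid points with weights summing to $2$) against the pointwise matrices (\ref{lemma5_matrix1})--(\ref{lemma5_matrix2}), which carry only a single copy of $\mathrm{diag}(k_1^2,k_2^2)$; a complete write-up must either rescale those conditions or absorb the deficit into the peeled-off semi-definite part.
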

	\begin{proof}
		We will use the notation $A\succeq 0$ to indicate that $A$ is positive semi-definite. Suppose that the cross-diffusion matrix is of the form $g(\cdot)M$. Then  $B_j=K_j^2+r(g_{j-1/2}+g_{j+1/2})MM^\top$ and
		\begin{equation*}
			\begin{aligned}
				\|B_j^{-1}U_j\|\leq \alpha &\iff \alpha^2 B_jB_j^\top - U_jU_j^\top \succeq 0\\
				&\iff \alpha^2 ( K_j^2 + r(g_{j-1/2}+g_{j+1/2})(MK_j+K_jM^\top) \\
				&\qquad+ r^2(g_{j-1/2}+g_{j+1/2})^2MM^\top ) - r^2g_{j+1/2}^2 MM^\top\succeq 0\\
				&\impliedby \alpha^2r^2(g_{j-1/2}+g_{j+1/2})^2MM^\top - r^2g_{j+1/2}^2 MM^\top\succeq 0 \\
				&\impliedby \alpha^2(g_{j-1/2}+g_{j+1/2})^2-g_{j+1/2}^2\geq 0 \\
				&\iff \alpha \geq \frac{g_{j+1/2}}{g_{j-1/2}+g_{j+1/2}},
			\end{aligned}
		\end{equation*}
		where we have used the fact that, considering the notation $M=[m_{ij}]$,
		\begin{equation*}
			\begin{aligned}
				K_j^2 + r(g_{j-1/2}&+g_{j+1/2})(MK_j+K_jM^\top)\\
				&=K_j^2+r(g_{j-1/2}+g_{j+1/2}){\begin{bmatrix}
						2k_{1j}m_{11} & k_{1j}m_{12}+k_{2j}m_{12}\\
						k_{1j}m_{21}+k_{2j}m_{21} & 2k_{1j}m_{22}
				\end{bmatrix}}	
			\end{aligned}
		\end{equation*}
		is positive semi-definite by the set of hypothesis 1-4 (the details are precisely the same as in the previous lemma). In the same way,
		\begin{equation*}
			\beta\geq \frac{g_{j-1/2}}{g_{j-1/2}+g_{j+1/2}}\implies \|B_j^{-1}L_{j-1}\|\leq \beta,
		\end{equation*}
		and so we choose in particular $\alpha=\frac{g_{j+1/2}}{g_{j-1/2}+g_{j+1/2}}$ and $\beta=\frac{g_{j-1/2}}{g_{j-1/2}+g_{j+1/2}}$ to obtain
		$\|B_j^{-1}U_j\|+\|B_j^{-1}L_{j-1}\|\leq \alpha + \beta = 1$.
		
		In order to prove the second part of the lemma, notice that for the general case of positive semi-definite cross-diffusion matrix,
		\begin{equation*}
			\begin{aligned}
				\|B_j^{-1}U_j\|\leq \alpha&\iff \alpha^2(K_jB_j^\top + B_jK_j - K_j^2 \\
				&\qquad+ L_{j-1}U_j^\top+U_jL_{j-1}^\top+L_{j-1}L_{j-1}^\top+U_jU_j^\top) -U_jU_j^\top\succeq 0\\
				&\iff \alpha^2( K_jB_j^\top + B_jK_j - K_j^2 ) \\
				&\qquad+ \alpha^2(L_{j-1}U_j^\top+U_jL_{j-1}^\top+L_{j-1}L_{j-1}^\top)+(\alpha^2-1)U_jU_j^\top \succeq 0.
			\end{aligned}
		\end{equation*}
		In the same way, it holds that 
		\begin{equation*}
			\begin{aligned}
				\|B_j^{-1}L_{j-1}\|\leq \beta &\iff \beta^2( K_jB_j^\top + B_jK_j - K_j^2 ) \\
				&\quad+ \beta^2(L_{j-1}U_j^\top+U_jL_{j-1}^\top+U_jU_j^\top)+(\beta^2-1)L_{j-1}L_{j-1}^\top \succeq 0.
			\end{aligned}
		\end{equation*}
		By hypothesis, $K_jB_j^\top + B_jK_j - K_j^2$ is positive definite. Notice further that although the indefiniteness of this term occurs with increasing values of $r$, in the limit $r=0$ this term is a positive diagonal matrix. Furthermore, matrices $L_{j-1}U_j^\top, U_jL^\top, L_{j-1}L_{j-1}^\top$ and $U_jU_j^\top$ are scaled by $r^2$, and therefore
		\begin{equation*}
			\begin{aligned}
				&\|B_j^{-1}U_j\|\leq \alpha \iff \alpha^2(K_jB_j^\top + B_jK_j - K_j^2 ) + \mathcal{O}(r^2) \succeq 0,\\
				&\|B_j^{-1}L_{j-1}\|\leq \beta \iff \beta^2(K_jB_j^\top + B_jK_j - K_j^2 ) + \mathcal{O}(r^2) \succeq 0,
			\end{aligned}
		\end{equation*}
		so we can always find $r$ such that the right hand sides are positive semi-definite and $\alpha+\beta\leq 1$.
	\end{proof}
	
	Taking into account the previous lemmas, we have all the ingredients to formulate a theorem on existence and uniqueness of the block LU factorization of matrix (\ref{matrix:blockTridiagonal}). We further enhance this result by discussing the stability of the factorization. The cornerstone of our stability approach is a result by Demmel, Higham and Schreiber \cite{Higham02}:
	
	\begin{theorem}
		Let $\hat{L}$ and $\hat{U}$ be the computed block $LU$ factors of $A\in\mathbb{R}^{2N_1 \times 2N_1}$ from algorithm (\ref{blockLU_decomposition_steps}), and let $\hat{x}$ be the computed solution to $Ax=b$ under standard underlying level-3 BLAS (matrix-matrix operations) assumptions (for details, see e.g. \cite{Golub13,Higham02}). Then
		\begin{equation*}
			\begin{aligned}
				&\hat{L}\hat{U}=A+\Delta A_1,\quad (A+\Delta A_2)\hat{x}=b,\\
				&\|\Delta A_i\|\leq d_{N_1}u(\|A\|+\|\hat{L}\|\|\hat{U}\|)+\mathcal{O}(u^2),\quad i=1,2,
			\end{aligned}
		\end{equation*}
		where $d_{N_1}$ is a constant commensurate with those in the assumptions and $u$ is the machine unit roundoff.
	\end{theorem}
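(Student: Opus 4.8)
The plan is to obtain this statement as a direct specialization of the backward error analysis of Demmel, Higham and Schreiber for block $LU$ factorization \cite{Higham02} to the block tridiagonal matrix (\ref{matrix:blockTridiagonal}). The starting point is the level-3 BLAS rounding model that the theorem invokes: for matrix--matrix products and for the solution of a small block linear system, one assumes that the computed result satisfies a normwise perturbation bound of the form $\mathrm{fl}(XY)=XY+E$ with $\|E\|\le c\,u\,\|X\|\|Y\|+\mathcal{O}(u^2)$, and analogously for block solves, where the constants $c$ depend polynomially on the block dimensions (here the diagonal blocks are $2\times 2$). I would state these assumptions explicitly at the outset, following \cite{Golub13,Higham02}, since the whole argument rests on them.

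The core of the argument is then to track the local rounding errors through the recurrence (\ref{blockLU_decomposition_steps}). Each elimination step performs exactly one block solve $\bar B_j\bar U_j=U_j$, one block product $L_j\bar U_j$, and one block subtraction $B_{j+1}-L_j\bar U_j$. Because (\ref{matrix:blockTridiagonal}) is block tridiagonal rather than dense, there are only $\mathcal{O}(N_1)$ such operations, so accumulating the per-step errors by the standard telescoping argument yields a factorization residual $\hat L\hat U=A+\Delta A_1$ whose constant $d_{N_1}$ grows only modestly with $N_1$, owing to the sparse block structure. Applying the same model to block forward substitution with $\hat L$ followed by block backward substitution with $\hat U$ gives the solution backward error $(A+\Delta A_2)\hat x=b$. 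In both cases the bound takes the stated form $\|\Delta A_i\|\le d_{N_1}u(\|A\|+\|\hat L\|\|\hat U\|)+\mathcal{O}(u^2)$, the term $\|\hat L\|\|\hat U\|$ appearing because the intermediate quantities are controlled by the computed factors rather than by $A$ itself.

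I expect the only genuine subtlety to be justifying that each block solve $\bar B_j\bar U_j=U_j$ is carried out in a backward stable manner, since this is where the conditioning of the diagonal blocks $\bar B_j$ enters and where a careless bound would pick up a spurious factor of $\|\bar B_j^{-1}\|$. This is precisely the gap filled by the preceding lemmas: Lemma \ref{lemma:inv_submatrices} guarantees that the $B_j$ are nonsingular, and Lemma \ref{lemma:blockuniqueexists} guarantees that the recursively generated Schur complements $\bar B_j$ are nonsingular and satisfy $\|\bar B_j^{-1}U_j\|<1$. The latter bound controls the size of the off-diagonal factor $\bar U_j$, and hence of $\hat U$, so that $\|\hat L\|\|\hat U\|$ does not blow up relative to $\|A\|$; this is exactly what makes the generic Demmel--Higham--Schreiber bound meaningful in our setting. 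With the hypotheses of those lemmas in force, the theorem follows by invoking the cited result verbatim.
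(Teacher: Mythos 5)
The paper does not actually prove this theorem: it is imported verbatim as a result of Demmel, Higham and Schreiber, with \cite{Higham02} cited as the source. Your ultimate move of ``invoking the cited result'' therefore coincides exactly with what the paper does, and your sketch of the underlying argument --- the level-3 BLAS rounding model, one block solve, one block product and one block subtraction per elimination step of (\ref{blockLU_decomposition_steps}), and a telescoping accumulation of the $\mathcal{O}(N_1)$ local residuals --- is the standard route one would take to prove it from scratch.

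The one point to correct is your final paragraph. The backward-error bound $\|\Delta A_i\|\leq d_{N_1}u(\|A\|+\|\hat{L}\|\|\hat{U}\|)+\mathcal{O}(u^2)$ is unconditional: it holds for whatever computed factors the algorithm produces, with no hypothesis on the cross-diffusion matrix, and the backward stability of each block solve $\bar B_j\bar U_j=U_j$ is part of the assumed rounding model for the underlying kernels, not something supplied by the conditioning lemmas. Lemmas \ref{lemma:blockuniqueexists}, \ref{lemma:inv_submatrices}, \ref{lemma:firstmatrixNormsBounds} and \ref{lemma:midmatricesNormsBounds} are not hypotheses of this theorem and play no role in its proof; their job is confined to the subsequent Theorem \ref{thrm:stab_final}, where they guarantee that the factorization exists and that the ratio $\|\hat{L}\|\|\hat{U}\|/\|A\|$ is bounded by $(2N_1-1)^2$, which is what turns the bound above into an actual stability statement. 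Presenting the lemmas as prerequisites of the present theorem would make you appear to prove a weaker statement (the bound under extra positivity hypotheses) than the one asserted, so keep the two stages --- the generic backward-error result and the problem-specific growth bound --- cleanly separated.
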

	
	As pointed in \cite{Higham02}, the stability of block $LU$ factorization lies in the ratio $\frac{\|\hat{L}\|\|\hat{U}\|}{\|A\|}$. If it is bounded by a reasonable function of $N_1$, then we can say that the computed solution solves a slightly perturbed version of the original system. We make the usual assumption that $\|L\|\|\|U\|\approx \|\hat{L}\|\|\|\hat{U}\|$ to attain a functional bound in the concluding theorem of this section.
	
	\begin{theorem}
		\label{thrm:stab_final}
		If the functions $\lambda_1(\cdot),$ $\lambda_2(\cdot)$ and the cross-diffusion matrix (\ref{cross-diffusion matrix}) are such that
		\begin{enumerate}
			\item $\lambda_{1}(x)=\lambda_{2}(x)$ for all $x$, or
			\item if $\lambda_{1}(x)>\lambda_{2}(x)$, for all $x$, and
			$${\begin{bmatrix}
					k_{1}^2(x)+4r(k_{1}(x)-k_{2}(x))d^1(x) & 2r(k_{1}(x)-k_{2}(x))d^2(x)\\
					2r(k_{1}(x)-k_{2}(x))d^2(x) & k_{2}^2(x)
			\end{bmatrix}}$$
			is positive semi-definite for all $x$, or
			\item if $\lambda_{2}(x)>\lambda_{1}(x)$, for all $x$, and
			$${\begin{bmatrix}
					k_{1}^2(x) & 2r(k_{2}(x)-k_{1}(x))d^3(x)\\
					2r(k_{2}(x)-k_{1}(x))d^3(x) & k_{2}^2(x)+4r(k_{2}(x)-k_{1}(x))d^4(x)
			\end{bmatrix}}$$
			is positive semi-definite for all $x$, or
			\item if $\lambda_{1}(x)-\lambda_{2}(x)$ changes sign, and the above two matrices are positive semi-definites for all $x$,
		\end{enumerate}	
		and the cross-diffusion matrix is of the form $g(\cdot)M$ for some non-negative real valued $g$ and positive semi-definite $M$, then the stable block $LU$ factorization (\ref{blockLU_factorization}) exists and it is unique for any choice of $r$ in (\ref{r_aos})-(\ref{r_amos}).
		Furthermore, for a general positive semi-definite, not necessarily symmetric, cross-diffusion matrix, the stable block $LU$ factorization (\ref{blockLU_factorization}) exists and is unique, for sufficiently small $r$ in (\ref{r_aos})-(\ref{r_amos}), as long as we change the requirements on the matrices in conditions 2-4 to positive definiteness.
	\end{theorem}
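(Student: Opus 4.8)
The plan is to split the statement into two independent tasks: the existence and uniqueness of the block $LU$ factorization, which amounts to assembling the preceding lemmas, and the stability assertion, which is where the real work lies. For the first task I would verify, one by one, the three hypotheses of Lemma \ref{lemma:blockuniqueexists}. Nonsingularity of the diagonal blocks $B_j$ (hypothesis 1) follows from Lemma \ref{lemma:inv_submatrices}: positive semi-definiteness of the cross-diffusion matrix forces the hermitian part of each $B_j$ to be of the form $I_2 + r\cdot(\text{positive semi-definite})$, hence positive definite, so that $0$ cannot be an eigenvalue. The estimate $\|B_1^{-1}U_1\|<1$ (hypothesis 2) is exactly the conclusion of Lemma \ref{lemma:firstmatrixNormsBounds} under the conditions 1--4 on $\lambda_1,\lambda_2$ stated in the theorem, and the inequality $\|B_j^{-1}U_j\|+\|B_j^{-1}L_{j-1}\|\leq 1$ (hypothesis 3) is delivered by Lemma \ref{lemma:midmatricesNormsBounds}, unconditionally in the $g(\cdot)M$ case and for sufficiently small $r$ in the general positive semi-definite case once conditions 2--4 are sharpened to positive definiteness. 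Feeding these into Lemma \ref{lemma:blockuniqueexists} yields existence, uniqueness, and the crucial byproduct $\|\bar B_j^{-1}U_j\|<1$ for every $j$.

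For the stability claim I would invoke the Demmel--Higham--Schreiber backward error theorem stated just above, which reduces everything to bounding the ratio $\|L\|\,\|U\|/\|A\|$ by a quantity mild in $N_1$ (here, in fact, independent of $N_1$). The upper factor $U$ is block bidiagonal with identity diagonal blocks and superdiagonal blocks $\bar U_j=\bar B_j^{-1}U_j$; writing $U=I+N$ with $N$ the nilpotent shift carrying the $\bar U_j$, a one-line computation on a test vector gives $\|N\|\leq \max_j\|\bar U_j\|<1$ by the byproduct above, whence $\|U\|<2$ uniformly. For the lower factor I would split $L$ into its block-diagonal part $\bar B_j$ and the subdiagonal shift carrying the $L_j$, so $\|L\|\leq \max_j\|\bar B_j\|+\max_j\|L_j\|$. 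The off-diagonal blocks $L_j,U_j$ are proportional to $r$ times averaged diffusion coefficients, hence bounded independently of $N_1$, and the Schur recursion $\bar B_{j+1}=B_{j+1}-L_j\bar U_j$ gives $\|\bar B_{j+1}\|\leq \|B_{j+1}\|+\|L_j\|\,\|\bar U_j\|\leq \|B_{j+1}\|+\|L_j\|$, a bound that does not accumulate with $j$. Since each $B_j$ has diagonal entries at least $1$ we also have $\|A\|\geq 1$, so $\|L\|\,\|U\|/\|A\|$ is controlled by a constant depending only on $r$ and on the assumed bounds for the diffusion and reaction coefficients.

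The hard part will be the uniform control of $\|\bar B_j\|$: a careless treatment of the Schur-complement recursion would let the diagonal blocks grow with $j$ and destroy the $N_1$-independence of the bound. The decisive point is the strict contraction $\|\bar B_j^{-1}U_j\|<1$ guaranteed by Lemma \ref{lemma:blockuniqueexists}, which simultaneously caps $\|U\|$ below $2$ and prevents the products $L_j\bar U_j$ from inflating the successive $\bar B_j$. A secondary subtlety I would flag is the degenerate reaction case where $\lambda_1=\lambda_2$ may vanish: there the nonsingularity of $B_j$ should be argued directly from the positive definiteness of $I_2+r\cdot(\text{positive semi-definite})$ rather than from the strict positivity of $\lambda_1,\lambda_2$ assumed in Lemma \ref{lemma:inv_submatrices}.
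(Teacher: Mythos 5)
Your overall architecture matches the paper's: existence and uniqueness by verifying the three hypotheses of Lemma \ref{lemma:blockuniqueexists} through Lemmas \ref{lemma:inv_submatrices}, \ref{lemma:firstmatrixNormsBounds} and \ref{lemma:midmatricesNormsBounds}, then stability via the Demmel--Higham--Schreiber criterion by bounding $\|L\|\|U\|/\|A\|$. One step you skip is where the paper spends the first half of its proof: the theorem's conditions 2--4 are stated in terms of the matrices (\ref{lemma4_matrix1})--(\ref{lemma4_matrix2}) of Lemma \ref{lemma:firstmatrixNormsBounds}, whereas Lemma \ref{lemma:midmatricesNormsBounds} requires positive semi-definiteness of the \emph{different} matrices (\ref{lemma5_matrix1})--(\ref{lemma5_matrix2}), in which the off-diagonal and corner perturbations are halved. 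You invoke Lemma \ref{lemma:midmatricesNormsBounds} as if its hypotheses were those of the theorem; the paper closes this gap with a short determinant computation showing that positive semi-definiteness of $\left[\begin{smallmatrix}\kappa_1+2a & 2b\\ 2b & \kappa_2\end{smallmatrix}\right]$ forces $\det\left[\begin{smallmatrix}\kappa_1+a & b\\ b & \kappa_2\end{smallmatrix}\right]>0$. The implication is true, but it must be stated; as written your proof does not connect the theorem's hypotheses to the lemma you rely on.

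On the stability bound your route is genuinely different from, and sharper than, the paper's. The paper uses the crude estimate $\|B\|\leq\sum_{i,j}\|B_{ij}\|$ to get $\|U\|<2N_1-1$, combines it with $\|\bar B_j\|\leq 2\|B_j\|$ to get $\|L\|\leq(2N_1-1)\|A\|$, and settles for the ratio $(2N_1-1)^2$, which is ``a reasonable function of $N_1$'' in the sense required. You instead exploit the block-shift structure: the superdiagonal nilpotent part of $U$ has $2$-norm at most $\max_j\|\bar U_j\|<1$, giving $\|U\|<2$ uniformly, and a similar diagonal-plus-shift split of $L$ combined with the non-accumulating Schur recursion yields a bound on $\|L\|\|U\|/\|A\|$ that is independent of $N_1$. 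This buys a dimension-free stability constant at the modest price of assuming the diffusion coefficients bounded (or, more cleanly, of bounding $\max_j\|B_j\|$ and $\max_j\|L_j\|$ by $\|A\|$ as the paper does, which would give an absolute constant without extra hypotheses). Your observation about the degenerate case $\lambda_1=\lambda_2=0$ is also a correct refinement: Lemma \ref{lemma:inv_submatrices} as stated assumes $\lambda_1,\lambda_2>0$, but the hermitian part of $B_j$ is the identity plus $r$ times a positive semi-definite matrix, so nonsingularity survives without strict positivity --- a point the paper passes over silently.
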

	\begin{proof}
		Existence and uniqueness follow by the previous lemmata. In this regard, all that it remains to check is that conditions 2-4 in Lemma 4 imply conditions 2-4 in Lemma 5. Considering matrix (\ref{lemma4_matrix1}), notice that taking $\kappa_1=k_1^2(x)$, $\kappa_2=k_2^2(x)$, $a=2r(k_1(x)-k_2(x))d^1(x)$ and $b=r(k_1(x)-k_2(x))d^2(x)$, if the matrix ${\begin{bmatrix}
				\kappa_1+2a & 2b \\ 2b & \kappa_2
		\end{bmatrix}}$ is positive semi-definite then its determinant is greater or equal to 0. But then $b^2\leq\frac{\kappa_1\kappa_2+2a\kappa_2}{4}$, and
		$$\det{\begin{bmatrix}
				\kappa_1+a & b \\ b & \kappa_2
		\end{bmatrix}}=\kappa_1\kappa_2+\kappa_2a-b^2\geq \kappa_1d_2+\kappa_2a-\frac{\kappa_1\kappa_2+2a\kappa_2}{4}>0,$$
		which, by also having positive diagonals, implies the positive semi-definiteness of matrix (\ref{lemma5_matrix1}). In a similar way the same result is shown for matrix (\ref{lemma5_matrix2}) and for the case of positive definiteness.
		
		It remains to prove the stability of the factorization. We will show that $\|L\|\|U\|\leq c \|A\|$ for some $c>0$, where $A$ is the matrix in (\ref{matrix:blockTridiagonal}) and $LU$ are the matrices in (\ref{blockLU_factorization}). First remember that, for any matrix $B$ with blocks $B_{ij}$, $\max \|B_{ij}\|\leq \|B\| \leq \sum\limits_{i,j} \| B_{ij}\|$. Due to Lemma \ref{lemma:firstmatrixNormsBounds}, $\|\bar{U}_j\|=\|\bar{B}_j^{-1}U_j\|<1$, and therefore $\|U\|<2N_1-1$. Furthermore, as
		\begin{equation*}
			\begin{aligned}
				\|\bar{B}_j\|&=\|B_j-L_{j-1}\bar{B}_j^{-1}\bar{U}_j\|\\
				&=\|B_j(I-\bar{B}_j^{-1}L_{j-1}\bar{B}_j^{-1}\bar{U}_j)\|\\
				&\leq \|B_j\|(\|I\|+\|\bar{B}_j^{-1}L_{j-1}\|\|\bar{B}_j^{-1}\bar{U}_j|)\\
				&\leq 2\|B_j\|,
			\end{aligned}
		\end{equation*}
		it holds that $\|L\|\leq (2N_1-1)\|A\|$. Therefore $\|L\|\|U\|< (2N_1-1)^2\|A\|$.
	\end{proof}
	
	\section{Generalization to three dimensional schemes}
	\label{section:3D}
	
	We now extend the theory developed in the previous sections to three-dimensional cross-diffusion processes. Consider the cross-diffusion system (\ref{cross-diffusion}) where $\Omega=(a_{1},b_{1})\times (a_{2},b_{2})\times (a_3,b_3)\subset\mathbb{R}^{3}$ is now the domain of interest. The remaining definitions for the diffusion process are the same, while the domain $\overline{\Omega}=\Omega\cup\Gamma$ is to be discretized by the points ${\bf x}_{{\bf j}}=(x_{j_{1}},x_{j_{2}},x_{j_3})$, where
	\[
	x_{j_{1}}=a_{1}+h_{1}j_{1},\; x_{j_{2}}=a_{2}+h_{2}j_{2},\; x_{j_3}=a_3+h_3j_3,\quad j_{k}=0,1,\ldots,N_{k},\]
	\[ h_{k}=\frac{b_{k}-a_{k}}{N_{k}}, \; k=1,2,3,
	\]
	for three given integers $N_{1}, N_{2}, N_3\geq 1$, ${\bf j}=(j_{1},j_{2},j_3)$ and ${\bf h}=(h_{1},h_{2},h_3)$. This spatial mesh on $\overline{\Omega}$ is denoted by $\overline{\Omega}_{\bf h}$ and  $\Gamma_{\bf h}=\Gamma \cap \overline{\Omega}_{\bf h}$.  
	Points halfway between two adjacent grid points are denoted by 
	${\bf x}_{{\bf j}\pm (1/2){\bf e}_k}={\bf x}_{\bf j} \pm  \frac{h_k}{2}{\bf e}_{k}$, $k=1,2,3$, where $\{{\bf e}_{1},{\bf e}_{2},{\bf e}_{3}\}$ 
	is the $\mathbb{R}^3$ canonical basis, that is, ${\bf e}_{k}$ is the standard basis unit vector in the $k$th direction. 
	
	The numerical solution of (\ref{cross-diffusion}) at the time $t^{m+1}$ can be obtained considering the following finite difference scheme:
	\begin{equation}
		\label{finite_difference_scheme_3D}
		\begin{cases}
			\frac{U_{\bf j}^{m+1}-U_{\bf j}^{m}}{\Delta t} =
			  \sum\limits_{k=1}^{3}\delta_{k}\left(d^1({\bf W}^m)_{\bf j}^{m+\theta}\delta_{k}U_{{\bf j}}^{m+\theta}+d^2({\bf W}^m)_{\bf j}^{m+\theta}\delta_{k}V_{{\bf j}}^{m+\theta}\right)
			-\lambda_1({\bf W}^m)_{\bf j}^{m+\theta} (U_{\bf j}^{m+\theta}-U_{\bf j}^0), \\
			\frac{V_{\bf j}^{m+1}-V_{\bf j}^{m}}{\Delta t} = 
			 \sum\limits_{k=1}^{3}\delta_{k}\left(d^3({\bf W}^m)_{\bf j}^{m+\theta}\delta_{k}U_{{\bf j}}^{m+\theta}+d^4({\bf W}^m)_{\bf j}^{m+\theta}\delta_{k}V_{{\bf j}}^{m+\theta}\right)-\lambda_2({\bf W}^m)_{\bf j}^{m+\theta} (V_{\bf j}^{m+\theta}-V_{\bf j}^0).
		\end{cases}
	\end{equation}
	
	The realization of AOS and AMOS schemes in the three dimensional setting is straightforward. For the AOS scheme, simply compute and add the third dimension in (\ref{aos_scheme}):
	\begin{subequations}
		\label{aos_scheme_3D}
		\begin{equation}
			\label{aos_3D_fractionalstep}
			\begin{cases}
				\begin{aligned}
					&\frac{U_{\bf j}^{m+1,k}-U_{\bf j}^{m}}{3\Delta t} =
					\delta_{k}\left(d^1({\bf W}^m)_{\bf j}^{m+\theta}\delta_{k}U_{{\bf j}}^{m+\theta}+d^2({\bf W}^m)_{\bf j}^{m+\theta}\delta_{k}V_{{\bf j}}^{m+\theta}\right)-\frac{1}{3}\lambda_1({\bf W}^m)_{\bf j}^{m+\theta} (U_{\bf j}^{m+\theta}-U_{\bf j}^0)\\
					&\frac{V_{\bf j}^{m+1,k}-V_{\bf j}^{m}}{3\Delta t} =
					\delta_{k}\left(d^3({\bf W}^m)_{\bf j}^{m+\theta}\delta_{k}U_{{\bf j}}^{m+\theta}+d^4({\bf W}^m)_{\bf j}^{m+\theta}\delta_{k}V_{{\bf j}}^{m+\theta}\right)-\frac{1}{3}\lambda_2({\bf W}^m)_{\bf j}^{m+\theta} (V_{\bf j}^{m+\theta}-V_{\bf j}^0)
				\end{aligned}
			\end{cases}
		\end{equation}
		for k=1,2,3, and
		\begin{equation}
			\label{aos_3D_averagestep}
			\qquad U_{\bf j}^{m+1}=\frac{1}{3}\sum_{k=1}^{3}U_{\bf j}^{m+1,k},\quad 
			V_{\bf j}^{m+1}=\frac{1}{3}\sum_{k=1}^{3}V_{\bf j}^{m+1,k}.
		\end{equation}
	\end{subequations}
	
	For the AMOS scheme we require some extra sets of systems. The number of full systems to be solved for a $d$-dimensional cross diffusion process is $d\cdot d!$:
	\begin{subequations}
		\label{amos_scheme_3D}
		\begin{equation}
			\label{amos_3D_factorizationstep1}
			\begin{cases}
				\begin{aligned}
					&\frac{U_{\bf j}^{m+1,k}-U_{\bf j}^{m}}{\Delta t} =
					\delta_{k}\left(d^1({\bf W}^m)_{\bf j}^{m+\theta}\delta_{k}U_{{\bf j}}^{m+\theta}+d^2({\bf W}^m)_{\bf j}^{m+\theta}\delta_{k}V_{{\bf j}}^{m+\theta}\right) -\frac{1}{3}\lambda_1({\bf W}^m)_{\bf j}^{m+\theta} (U_{\bf j}^{m+\theta}-U_{\bf j}^0) \\
					&\frac{V_{\bf j}^{m+1,k}-V_{\bf j}^{m}}{\Delta t} =
					\delta_{k}\left(d^3({\bf W}^m)_{\bf j}^{m+\theta}\delta_{k}U_{{\bf j}}^{m+\theta}+d^4({\bf W}^m)_{\bf j}^{m+\theta}\delta_{k}V_{{\bf j}}^{m+\theta}\right)-\frac{1}{3}\lambda_2({\bf W}^m)_{\bf j}^{m+\theta} (V_{\bf j}^{m+\theta}-V_{\bf j}^0)
				\end{aligned}
			\end{cases}
		\end{equation}
		for $k=1,2,3$,
		\begin{equation}
			\label{amos_3D_factorizationstep2}
			\begin{cases}
				\begin{aligned}
					&\frac{U_{\bf j}^{m+1,ki}-U_{\bf j}^{m+1,k}}{\Delta t} =
					\delta_{i}\left(d^1({\bf W}^m)_{\bf j}^{m+\theta}\delta_{i}U_{{\bf j}}^{m+\theta,k}+d^2({\bf W}^m)_{\bf j}^{m+\theta}\delta_{i}V_{{\bf j}}^{m+\theta,k}\right) -\frac{1}{3}\lambda_1({\bf W}^m)_{\bf j}^{m+\theta} (U_{\bf j}^{m+\theta,k}-U_{\bf j}^0)\\
					&\frac{V_{\bf j}^{m+1,ki}-V_{\bf j}^{m+1,k}}{\Delta t} =
					\delta_{i}\left(d^3({\bf W}^m)_{\bf j}^{m+\theta}\delta_{i}U_{{\bf j}}^{m+\theta,k}+d^4({\bf W}^m)_{\bf j}^{m+\theta}\delta_{i}V_{{\bf j}}^{m+\theta,k}\right)-\frac{1}{3}\lambda_2({\bf W}^m)_{\bf j}^{m+\theta} (V_{\bf j}^{m+\theta,k}-V_{\bf j}^0)
				\end{aligned}
			\end{cases}
		\end{equation}
		for $k,i=1,2,3$, $i\neq k$, and
		\begin{equation}
			\label{amos_3D_factorizationstep3}
			\begin{cases}
				\begin{aligned}
					&\frac{U_{\bf j}^{m+1,kij}-U_{\bf j}^{m+1,ki}}{\Delta t} =
					\delta_{j}\left(d^1({\bf W}^m)_{\bf j}^{m+\theta}\delta_{j}U_{{\bf j}}^{m+\theta,ki}+d^2({\bf W}^m)_{\bf j}^{m+\theta}\delta_{j}V_{{\bf j}}^{m+\theta,ki}\right) -\frac{1}{3}\lambda_1({\bf W}^m)_{\bf j}^{m+\theta} (U_{\bf j}^{m+\theta,ki}-U_{\bf j}^0)\\
					&\frac{V_{\bf j}^{m+1,kij}-V_{\bf j}^{m+1,ki}}{\Delta t} =
					\delta_{j}\left(d^3({\bf W}^m)_{\bf j}^{m+\theta}\delta_{j}U_{{\bf j}}^{m+\theta,ki}+d^4({\bf W}^m)_{\bf j}^{m+\theta}\delta_{j}V_{{\bf j}}^{m+\theta,ki}\right)-\frac{1}{3}\lambda_2({\bf W}^m)_{\bf j}^{m+\theta} (V_{\bf j}^{m+\theta,ki}-V_{\bf j}^0)
				\end{aligned}
			\end{cases}
		\end{equation}
		for $k,i,j=1,2,3$, $j\neq i\neq k$, and finally average
		\begin{equation}
			\label{amos_3D_average}
			U_{\bf j}^{m+1}=\frac{1}{6}\sum_{i\neq j \neq k} U_{\bf j}^{m+1,kij},\qquad
			V_{\bf j}^{m+1}=\frac{1}{6}\sum_{i\neq j \neq k} V_{\bf j}^{m+1,kij}.
		\end{equation}
	\end{subequations}
	
	As for the computational considerations of 3D schemes, the full implicit method (\ref{finite_difference_scheme}) requires the solution of a linear system of size $2N_1N_2N_3$, while the three-dimensional AOS-CD (\ref{aos_scheme_3D}) requires the solution of $N_2N_3$ linear systems of size $2N_1$, $N_1N_3$ linear systems of size $2N_2$, and $N_1N_2$ linear systems of size $2N_3$, and the three-dimensional AMOS-CD scheme (\ref{amos_scheme_3D}) requires the solution of $6N_2N_3$ linear systems of size $2N_1$, $6N_1N_3$ linear systems of size $2N_2$, and $6N_1N_2$ linear systems of size $2N_3$. The systems design is the same as in the end of Section \ref{section:splittings}.
	
	We cluster the extension to the three dimensional case of previous theoretical results into a single theorem and omit the proof, as it follows the same steps as Theorems \ref{thrm:L2_stab_conditions}, \ref{thrm:split_l2_stab} and \ref{thrm:stab_final}. Define now for each $x_{\bf j}=(x_{j_1},x_{j_2},x_{j_3})\in \bar{\Omega}_{\bf h}$ the cube $\square_{\bf j}=(x_{j_1},x_{j_1+1})\times(x_{j_2},x_{j_2+1})\times(x_{j_3},x_{j_3+1})$, denote by $|\square_{\bf j}|$ the measure of $\square_{\bf j}$, and consider the discrete inner products 
	\begin{equation*}\label{inner_product1_3D}
		\begin{aligned}
			({U},{ V})_h=&\sum_{\square_{\bf j}\subset \Omega} \frac{|\square_{\bf j}|}{8} \big({ U}_{j_1,j_2,j_3}{ V}_{j_1,j_2,j_3} +{ U}_{j_1+1,j_2,j_3}{{ V}}_{j_1+1,j_2,j_3}\\
			&\quad +{U}_{j_1,j_2+1,j_3}{V}_{j_1,j_2+1,j_3}+{ U}_{j_1+1,j_2+1,j_3}{{ V}}_{j_1+1,j_2+1,j_3}\\
			&\quad +{ U}_{j_1,j_2,j_3+1}{ V}_{j_1,j_2,j_3+1} +{ U}_{j_1+1,j_2,j_3+1}{{ V}}_{j_1+1,j_2,j_3+1}\\
			&\quad +{U}_{j_1,j_2+1,j_3+1}{V}_{j_1,j_2+1,j_3+1}+{ U}_{j_1+1,j_2+1,j_3+1}{{ V}}_{j_1+1,j_2+1,j_3+1}\big)
		\end{aligned}
	\end{equation*}
	\begin{equation*}
		\begin{aligned}
			({U},{V})_{h_1^*}=&\sum_{\square_{\bf j}\subset \Omega} \frac{|\square_{\bf j}|}{4} \big({U}_{j_1+1/2,j_2,j_3}{{V}}_{j_1+1/2,j_2,j_3}+{U}_{j_1+1/2,j_2+1,j_3}{V}_{j_1+1/2,j_2+1,j_3}\\
			&+{U}_{j_1+1/2,j_2,j_3+1}{{V}}_{j_1+1/2,j_2,j_3+1}+{U}_{j_1+1/2,j_2+1,j_3+1}{V}_{j_1+1/2,j_2+1,j_3+1}\big),
		\end{aligned}
	\end{equation*}
	\begin{equation*}
		\begin{aligned}
			({ U},{ V})_{h_2^*}=&\sum_{\square_{\bf j}\subset \Omega} \frac{|\square_{\bf j}|}{4} \big({ U}_{j_1,j_2+1/2,j_3}{{ V}}_{j_1,j_2+1/2,j_3} +{ U}_{j_1+1,j_2+1/2,j_3}{{ V}}_{j_1+1,j_2+1/2,j_3}\\
			&+{ U}_{j_1,j_2+1/2,j_3+1}{{ V}}_{j_1,j_2+1/2,j_3+1} +{ U}_{j_1+1,j_2+1/2,j_3+1}{{ V}}_{j_1+1,j_2+1/2,j_3+1}
			\big),
		\end{aligned}
	\end{equation*}
	and
	\begin{equation*}
		\begin{aligned}
			({ U},{ V})_{h_3^*}=&\sum_{\square_{\bf j}\subset \Omega} \frac{|\square_{\bf j}|}{4} \big({ U}_{j_1,j_2,j_3+1/2}{{ V}}_{j_1,j_2,j_3+1/2} +{ U}_{j_1+1,j_2,j_3+1/2}{{ V}}_{j_1+1,j_2,j_3+1/2}\\
			&+{ U}_{j_1,j_2+1,j_3+1/2}{{ V}}_{j_1,j_2+1,j_3+1/2} +{ U}_{j_1+1,j_2+1,j_3+1/2}{{ V}}_{j_1+1,j_2+1,j_3+1/2}
			\big).
		\end{aligned}
	\end{equation*}
	
	\begin{theorem}
		
		\
		
		\begin{enumerate}
			\item If the cross-diffusion matrix (\ref{cross-diffusion matrix}) is such that, for any $U,V\in\mathbb{R}^{N_1\times N_2 \times N_3}$,
			\begin{equation}\label{cdts:functions_semiimplicit_3D}
				\begin{aligned}
					\sum_{k=1}^3 (d^1\delta_k U, \delta_k U)_{h_k^*}+(d^2\delta_k V, \delta_k U)_{h_k^*}+(d^3\delta_k U, \delta_k V)_{h_k^*}+(d^4\delta_k V, \delta_k V)_{h_k^*}\geq 0
				\end{aligned}
			\end{equation}
			then the scheme (\ref{finite_difference_scheme_3D}) is unconditionally stable for $\theta\in[\frac{1}{2},1]$.
			
			If the functions $d^1,d^2,d^3$ and $d^4$ are such that, for any $U,V\in\mathbb{R}^{N_1\times N_2\times N_3}$,
			\begin{equation}
				\begin{aligned}
					\label{cdts:functions_explicit_3D}
					\sum_{k=1}^3 &(d^1\delta_k U, \delta_k U)_{h_k^*}+(d^2\delta_k V, \delta_k U)_{h_k^*}+(d^3\delta_k U, \delta_k V)_{h_k^*}+(d^4\delta_k V, \delta_k V)_{h_k^*}\\
					&-\Big( \frac{4\Delta t}{h_k^2}\big((1+\eta_1)(\|d^1\delta_{k}U\|_{h_k^*}^2+\|d^2\delta_{k}V\|_{h_k^*}^2 +2(d^1\delta_{k}U,d^2\delta_{k}V)_{h_k^*})\\
					&+(1+\eta_2)(\|d^3\delta_{k}U\|_{h_k^*}^2+\|d^4\delta_{k}V\|_{h_k^*}^2+2(d^3\delta_{k}U,d^4\delta_{k}V)_{h_k^*}) \Big) \geq 0,
				\end{aligned} 
			\end{equation}
			for some $\eta_1,\eta_2 >0$, then the scheme (\ref{finite_difference_scheme_3D}) is stable for $\theta=0$.
			\item If the cross-diffusion matrix (\ref{cross-diffusion matrix}) is such that, for any $U,V\in\mathbb{R}^{N_1\times N_2\times N_3}$, condition (\ref{cdts:aos_imp_stab}) is satisfied
			then the schemes (\ref{aos_scheme_3D}) and (\ref{amos_scheme_3D}) are unconditionally stable for the case $\theta\in[\frac{1}{2},1]$.
			
			If the cross-diffusion matrix (\ref{cross-diffusion matrix}) is such that, for any $U,V\in\mathbb{R}^{N_1\times N_2\times N_3}$, condition (\ref{cdts:aos_exp_stab}) is satisfied for some $\eta_1,\eta_2 >0$, then the schemes (\ref{aos_scheme_3D}) and (\ref{amos_scheme_3D}) are stable for $\theta=0$.
			
			\item If the functions $\lambda_1(\cdot)$, $\lambda_2(\cdot)$ and the cross-diffusion matrix (\ref{cross-diffusion matrix}) are such that one of conditions $1-4$ of Theorem \ref{thrm:stab_final} is satisfied, and the cross-diffusion matrix is of the form $g(\cdot)M$ for some non-negative real valued function $g$ and positive semi-definite matrix $M$, then the stable block $LU$ factorization (\ref{blockLU_factorization}) exists and it is unique for any choice of $r$ in (\ref{r_aos})-(\ref{r_amos}).
			Furthermore, for a general positive semi-definite, not necessarily symmetric, cross-diffusion matrix, the stable block $LU$ factorization (\ref{blockLU_factorization}) exists and is unique, for sufficiently small $r$ in (\ref{r_aos})-(\ref{r_amos}), as long as we change the requirements on the matrices in conditions 2-4 to positive definiteness.
		\end{enumerate}
	\end{theorem}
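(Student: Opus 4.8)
The plan is to retrace the three proofs being generalized, tracking only the modifications forced by the extra spatial direction. For the first part, concerning the scheme (\ref{finite_difference_scheme_3D}), I would take the discrete $(\cdot,\cdot)_h$ inner product of the two equations against $U^{m+\theta}$ and $V^{m+\theta}$ respectively, and apply summation by parts in each of the three coordinate directions. The summation-by-parts identity converts $(\delta_k(d^\ell\,\delta_k Z),Z)_h$ into $-(d^\ell\,\delta_k Z,\delta_k Z)_{h_k^*}$ with no surviving boundary contribution, owing to the homogeneous Neumann conditions; the three staggered inner products $(\cdot,\cdot)_{h_1^*},(\cdot,\cdot)_{h_2^*},(\cdot,\cdot)_{h_3^*}$ defined just above the statement are precisely what makes this work for $k=1,2,3$. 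The diffusion contribution is then exactly the sum in (\ref{cdts:functions_semiimplicit_3D}), which the hypothesis renders non-negative; combining this with the non-negativity of the reaction multiplier and the discrete Duhamel principle reproduces the $\theta\in[\tfrac12,1]$ estimate verbatim. For $\theta=0$ one instead keeps the $\|\cdot\|_h$ norm of the right-hand side, and the cross terms generated by squaring produce exactly the $\frac{4\Delta t}{h_k^2}$ corrections of (\ref{cdts:functions_explicit_3D}); the argument is otherwise identical to Theorem \ref{thrm:L2_stab_conditions}.

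For the second part I would repeat the estimate of Theorem \ref{thrm:split_l2_stab} one direction at a time. For the AOS scheme (\ref{aos_scheme_3D}) each fractional step is a single-direction copy of the analysis above, so each $U^{m+1,k},V^{m+1,k}$ inherits a bound of the same form; the averaging step (\ref{aos_3D_averagestep}), now with weight $\tfrac13$ and three summands, is absorbed by the triangle inequality. For the AMOS scheme (\ref{amos_scheme_3D}) I would chain the per-step inequalities through the three factorization stages (\ref{amos_3D_factorizationstep1})--(\ref{amos_3D_factorizationstep3}), each stage feeding the next, and close with the triangle inequality in the symmetric average (\ref{amos_3D_average}) over all orderings. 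The only genuinely new bookkeeping here is combinatorial: the averaging runs over all $3!=6$ permutations, and the constants $a_\epsilon,b_\epsilon$ accumulate across a longer chain of fractional steps rather than across two.

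The third part requires almost no new work, and this is the key observation. When the splitting is resolved along a single line (say direction $1$, with the pixels ordered as in (\ref{w_ordered_blockTridiagonal})), the resulting system is one-dimensional, so its matrix is again block tridiagonal with $2\times 2$ blocks of exactly the form (\ref{matrix:blockTridiagonal}); the three-dimensional mesh enters only through the backward averages (\ref{definition_D}) defining the entries, not through the block structure. Consequently Lemmas \ref{lemma:blockuniqueexists}--\ref{lemma:midmatricesNormsBounds} and Theorem \ref{thrm:stab_final} apply to each line system without modification, and the existence, uniqueness, and the bound $\|L\|\,\|U\|<(2N_1-1)^2\|A\|$ transfer directly, with $N_1$ replaced by the relevant $N_k$.

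I expect the main obstacle to be not any single step but the AMOS stability estimate: propagating the Duhamel-type bound cleanly through three successive factorization stages, and verifying that the symmetric average over all six orderings does not degrade the constant, demands careful tracking of how $\tilde\epsilon$, $a_\epsilon$ and $b_\epsilon$ compound. Everything else — the summation by parts in three directions and the reuse of the one-dimensional block-$LU$ lemmas — is mechanical once the three staggered inner products are in place.
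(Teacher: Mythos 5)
Your proposal matches the paper's intended argument: the paper explicitly omits this proof on the grounds that it ``follows the same steps as Theorems \ref{thrm:L2_stab_conditions}, \ref{thrm:split_l2_stab} and \ref{thrm:stab_final},'' and your sketch retraces exactly those steps --- summation by parts against the three staggered inner products, triangle-inequality absorption of the $\tfrac13$-weighted AOS average and the $\tfrac16$-weighted AMOS average over the $3!$ orderings, and the observation that the block $LU$ analysis is a per-line (one-dimensional) statement so Lemmas \ref{lemma:blockuniqueexists}--\ref{lemma:midmatricesNormsBounds} apply verbatim with $N_1$ replaced by the relevant $N_k$. No gaps; your flagged concern about compounding the Duhamel constants through the longer AMOS chain is the right place to be careful, but it is bookkeeping rather than a new idea.
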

	
	\section{Speedup - operation count and numerical experiments}
	\label{section:speedups}
	
	We now discuss the speedup gained by using either AOS-CD or AMOS-CD, both with and without the banded algorithm. A pass in the first direction is described in Algorithm \ref{alg:block_tridiagonal}. For the other directions the algorithm is analogous.
	
	\begin{algorithm}
		\label{alg:blockLUPass}
		\caption{Block tridiagonal row iteration in fully implicit split cross-diffusion}
		{\begin{algorithmic}[1]
				\Require Vectors $U_{1,i}^m,\dots,U_{N_1,i}^m$, $V_{1,i}^m,\dots,V_{N_1,i}^m$ and evaluated mid-pixels $d^\ell({\bf W}^m)_{N_1-1/2,i},\dots,d^\ell({\bf W}^m)_{1+1/2,i}$, for $\ell=1,\dots,4$.
				\State Set $w^m$ as in (\ref{w_ordered_blockTridiagonal});
				\State Solve $B_{1,i}x_1=w^m_1$. Here, $x_1$ ($w_1^m$) stands for the first pair of elements in $x$ ($w^m$), $x_2$ ($w_2^m$) for the second pair in $x$ ($w^m$), etc;
				\State Set $w^m_1=x_1$;
				\For{$k=2,\dots,N_1$}
				\State Solve $B_{k-1,i}\bar U_{k-1,i}=U_{k-1,i}$;
				\State Set $B_{k,i}=B_{k,i}-L_{k-1,i}\bar U_{k-1,i}$;
				\State Set $w_k^m=w_k^m-L_{k-1}w_{k-1}$;
				\State Solve $B_kx_k=w_k^m$;
				\State Set $w_k^m=x_k$;
				\EndFor
				\State Set $w_{N_1}^{m+1}=x_{N_1}^m$;
				\For{$k=N_1,\dots,2$}
				\State Set $w_{k-1}^{m+1}=w_{k-1}^m-\bar U_{k-1,i}w_k^m$;
				\EndFor
				\Ensure Filtered rows $U_{1,i}^{m+1},\dots,U_{N_1,i}^{m+1}$ and $V_{1,i}^{m+1},\dots,V_{N_1,i}^{m+1}$. Optionally, save also factorization terms $B_{2,i}\bar{U}_{1,i},\dots,B_{N_1,i}\bar{U}_{N_1-1,i}$ (for details see the discussion in Section \ref{section:speedups}).
		\end{algorithmic}}\label{alg:block_tridiagonal}
	\end{algorithm}
	
	The standard $LU$ solver for a system with $n$ equations and unknowns, disregarding lookups for pivoting, requires a total of $\frac{2}{3}n^3+\frac{3}{2}n^2-\frac{1}{6}n$ flops ($\frac{1}{6}n(n-1)(4n+1)$ for the factorization, $n(n-1)$ flops for the forward substitution and $n^2$ flops for the backward substitution \cite{Golub13}). Therefore, for a squared domain discretized with a grid of size $N$ in each direction, and disregarding the explicit computations, the $\theta$-method requires $\frac{2}{3}N^6+\frac{3}{2}N^4-\frac{1}{6}N^2$ flops, while a pass in a single direction of both AOS-CD and AMOS-CD requires $\frac{16}{3}N^3+6N^2-\frac{1}{3}N$ flops, resulting in a total of $\frac{32}{3}N^4+12N^3+\frac{4}{3}N^2$ flops for an AOS iteration and $\frac{64}{3}N^4+24N^3+\frac{2}{3}N^2$ for an AMOS iteration.
	
	Taking now into consideration the banded solver for a squared domain of size $N$, a pass in each direction of both AOS-CD and AMO S-CD requires $(18+48)(N-1)$ flops for the factorization (corresponding, respectively, to rows 5 and 6 in Algorithm (\ref{alg:block_tridiagonal})), $9+(12+9)(N-1)$ flops for the forward substitution (rows 2, 7 and 8) and $12(N-1)$ flops for the back substitution (rows 11 and 13). This amounts to a total of $200N^2+180N$ flops for a banded AOS-CD iteration and $398N^2+360$ flops for a banded AMOS-CD iteration. However, notice that the $LU$ factorization is independent of the right hand side. Although in AOS-CD each direction pass is carried out only once, in AMOS-CD each direction is passed twice (in a two-dimensional domain) or more. Therefore, as the factorization takes roughly two thirds of the number of flops of a pass, we believe that non-negligible gains could be obtained if each factorization was saved for further use (that is, if the system has no memory limitations for the current domain, in the sense that it its performance is not hindered by the allocation of an extra $4(N-1)N$ array for the two-dimensional case, or an extra $6(N-1)N^2$ array for the three-dimensional case). However, this situation requires some extra care to optimize the system's reading and inter-function communications, and we not take it into account in our numerical experiments.
	
	Similar calculations can be performed for non-square or three-dimensional domains. The flop loads, characterized by the factors with the highest magnitude, are described in Table \ref{tab:speedups}. Numerical tests performed with double precision in a Ryzen 7 3700X @ 3600 Mhz with 16 GB of RAM can be seen in Figure \ref{fig:speedups}. The linear system solutions were obtained through the Matlab backslash operator "$\backslash$", while the iterative banded procedure was implemented as a mex call using the Matlab Coder Toolbox for a fair comparison with the highly optimized embedded system solvers. The experiments show an impressive improvement on running time, averaging a speed-up for two-dimensional processes of magnitude $4$, $2$, $100$, and $50$ with, respectively, schemes AOS-CD, AMOS-CD, AOS-CD Banded, and AMOS-CD Banded, and averaging a speed-up for three-dimensional processes of magnitude $70$, $11$, $3300$, and $600$ with, respectively, the same schemes. Notice also that the running times of Banded iterations remain close to explicit implementations ($\theta=0$ in (\ref{finite_difference_scheme}) and (\ref{finite_difference_scheme_3D})).

	\begin{table}[!htb]
		\centering
		\begin{tabular}{lllll}
			& \textit{\small 2D Squared} & \textit{\small 3D Cubic} & \textit{\small 2D General}               & \textit{\small 3D General}  \\ \hline \vspace{0.025in}
			\textit{ Fully Implicit}        & $ \frac{2}{3}N^6$    & $ \frac{2}{3}N^9$  & $ \frac{2}{3}N_1^3N_2^3$           & $ \frac{2}{3}N_1^3N_2^3N_3^3$                        \\ \hline
			\textit{ AOS-CD}         & $ \frac{32}{3}N^4$   & $ 16 N^5$          & $ \frac{16N_1N_2}{3}\sum_{i=1}^2 N_k^2$ & $ \frac{16N_1N_2N_3}{3} \sum_{i=1}^3 N_k^2$ \\ \hline
			\textit{ AMOS-CD}        & $ \frac{64}{3}N^4$   & $ 96 N^5$          & $ \frac{32N_1N_2}{3}\sum_{i=1}^2 N_k^2$  & $ \frac{96N_1N_2N_3}{3}\sum_{i=1}^3 N_k^2$ \\ \hline
			\textit{ AOS-CD Banded}  & $ 200N^2$            & $ 300 N^3$         & $ 200N_1N_2$                       & $ 300 N_1N_2N_3$                                     \\ \hline
			\textit{ AMOS-CD Banded} & $ 398N^2$            & $ 1195 N^3$        & $ 398N_1N_2$                       & $ 1195 N_1N_2N_3$                                    \\ 
		\end{tabular}
	\vspace{5pt}
		\caption{Flop counts of the factors with highest magnitude for each type of implicit implementation and different domains.}
		\label{tab:speedups}
	\end{table}

	\begin{figure*}[!htb]
		\center
		\minipage{0.5\textwidth}
		\includegraphics[width=\linewidth]{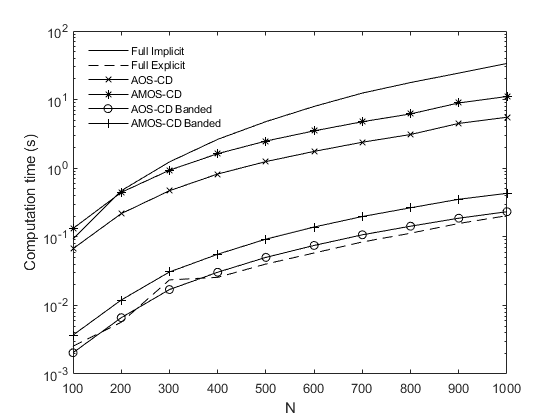}
		\endminipage
		\minipage{0.5\textwidth}
		\includegraphics[width=\linewidth]{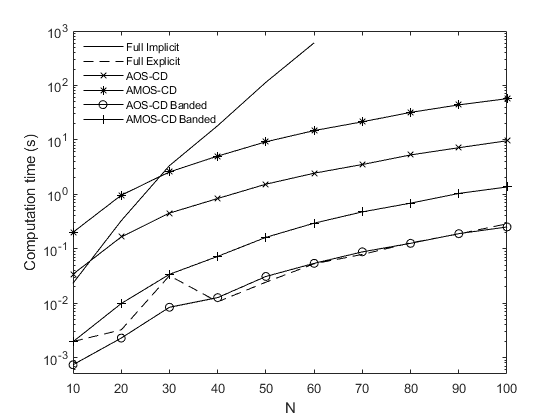}
		\endminipage
		\caption{Implicit iteration computation times  for 2D and 3D splitting methods on square (left) and cubic (right) domains. Missing markers indicate impracticable computations for the test machine.}
		\label{fig:speedups}
	\end{figure*}
	
	\section{Concluding remarks}
	We proposed two stable splitting techniques for cross-diffusion processes in two and three dimensions. Using a special factorization of the system matrix, we propose fast and stable algorithms for implicit cross-diffusion schemes, allowing them to become competitive against their explicit counterparts regarding computation time. This fact empowers the use of cross-diffusion processes in multi-dimensional applications that require on-the-fly results, such as image denoising or image segmentation.
	
	Future work, inspired by these results, includes the study of convergence and consistency properties of the schemes considered in this paper, their application in GPU computing architectures and their use in machine learning frameworks.
	
	\section*{Acknowledgments}
	{\small
		The author was partially supported by the Centre for Mathematics of the University of Coimbra - UIDB/00324/2020, funded by the Portuguese Government through FCT/MCTES, and by the FCT grant PD/BD/142956/2018.}
	
	%	\bibliographystyle{amsalpha}
	%	\bibliography{fastimplicitcd_arxiv}
	\newcommand{\etalchar}[1]{$^{#1}$}

\end{document}